\documentclass[psamsfonts,fceqn,leqno]{amsart}
\usepackage{mathrsfs,latexsym,amsfonts,amssymb,curves,epic}
\setcounter{page}{1} \setlength{\textwidth}{14.6cm}
\setlength{\textheight}{22.5cm} \setlength{\evensidemargin}{0.8cm}
\setlength{\oddsidemargin}{0.8cm} \setlength{\topmargin}{0.8cm}
\usepackage{color}

\newtheorem{theorem}{Theorem}[section]
\newtheorem{corollary}[theorem]{Corollary}
\newtheorem{proposition}[theorem]{Proposition}
\newtheorem{lemma}[theorem]{Lemma}
\newtheorem{question}[theorem]{Question}
\theoremstyle{definition}
\newtheorem{definition}[theorem]{Definition}
\newtheorem{remark}[theorem]{Remark}
\newtheorem{example}[theorem]{Example}
\newtheorem{problem}[theorem]{Problem}

\def\N{{\mathbb N}}
\def\Q{{\mathbb Q}}

\begin{document}
\noindent \vspace{0.5in}

\title[Countable tightness and $\mathfrak G$-bases on Free topological groups]
{Countable tightness and $\mathfrak G$-bases on Free topological groups}

  \author{Fucai Lin}
  \address{(Fucai Lin): School of mathematics and statistics, Minnan Normal University, Zhangzhou 363000, P. R. China}
  \email{linfucai2008@aliyun.com; linfucai@mnnu.edu.cn}
    \author{Alex Ravsky}
  \address{(Alex Ravsky): Pidstrygach Institute for Applied Problems of Mechanics and Mathematics of NASU, Naukova 3b,
Lviv, 79060, Ukraine}
  \email{oravsky@mail.ru}
    \author{Jing Zhang}
  \address{(Jing Zhang): School of mathematics and statistics, Minnan Normal University, Zhangzhou 363000, P. R. China}
  \email{zhangjing86@126.com}
\thanks{The first author is supported by the NSFC (Nos. 11571158, 11201414, 11471153),
  the Natural Science Foundation of Fujian Province (Nos. 2016J05014, 2016J01671, 2016J01672) of China
 and the project of Abroad Fund of Minnan
  Normal University. This paper was partially
  written when the first author was visiting the School of Computer and
  Mathematical Sciences at Auckland University of Technology from March
  to September 2015, and he wishes to thank the hospitality of his host.}

\keywords{Free topological group; free Abelian topological group; countable tightness; countable fan-tightness; $\mathfrak G$-base; strong Pytkeev property; universally uniform $\mathfrak{G}$-base.}
\subjclass[2000]{Primary 54H11, 22A05; Secondary  54E20; 54E35; 54D50; 54D55.}

\begin{abstract}
 Given a Tychonoff space $X$, let $F(X)$ and $A(X)$ be respectively the free
  topological group and the free Abelian topological group over $X$ in the sense
  of Markov. In this paper, we consider two topological properties of $F(X)$ or $A(X)$, namely the countable tightness
  and $\mathfrak G$-base. We provide some characterizations of the
  countable tightness and $\mathfrak G$-base of $F(X)$ and $A(X)$ for various
  special classes of spaces $X$. Furthermore, we also study the countable tightness
  and $\mathfrak G$-base of some $F_{n}(X)$ of $F(X)$.
\end{abstract}
\maketitle
\section{Introduction}
Let $F(X)$ and $A(X)$ be respectively the free topological group
and the free Abelian topological group over a Tychonoff space $X$
in the sense of Markov \cite{MA1945}. For every $n\in\mathbb{N}$, by $F_{n}(X)$ we denote the subspace of $F(X)$ that consists of all words of reduced length at most $n$ with respect to the free basis $X$. The subspace
$A_{n}(X)$ is defined similarly.
We always use $G(X)$ to denote $F(X)$ or $A(X)$, and $G_{n}(X)$ to $F_{n}(X)$ or $A_{n}(X)$ for each $n\in \mathbb{N}$. Therefore, any statement about $G(X)$ applies to $F(X)$ and $A(X)$, and about $G_{n}(X)$ applies to $F_{n}(X)$ and $A_{n}(X)$.

One of the techniques of studying the topological structure of free topological groups is
to clarify the relations of subspaces $X$, $F(X)$, $A(X)$, $F_{n}(X)$ and $A_{n}(X)$, where $n\in\mathbb{N}$.
It is well known that only when the space $X$ is discrete, $F(X)$ and $A(X)$ can be first-countable. Therefore, the space $F(X)$ is first-countable if and only if $X$ is discrete \cite{G1962}. Similarly, the groups $F(X)$ and $A(X)$ are locally compact if and only if the space $X$ is discrete \cite{D961}. More generally, P. Nickolas and M. Tkachenko proved that if one of the groups
$F(X)$ or $A(X)$ is almost metrizable, then the space $X$ is discrete \cite{T2003}.
Further, K. Yamada gave a characterization for a metrizable space $X$ such that some the spaces $F_{n}(X)$ and $A_{n}(X)$ are first-countable \cite{Y1998}.

Recently, Z. Li et al. in \cite{LLL} proved that for each stratifiable $k$-space, the group $F(X)$ is of countable tightness if and only if the space $X$ is separable or discrete. In Section 3, we refine this result by giving a characterization of a space
$X$ such that the countable tightness of the space $F_{8}(X)$ implies the countable tightness of the group $F(X)$. Furthermore, since each space with the countable fan-tightness or the strong Pytkeev property is of countable tightness, we also discuss the topological properties of the countable fan-tightness and the strong Pytkeev property of free topological group $F(X)$ or some $F_{n}(X)$.

Ferrando et al. in \cite{F2006} introduced the concept of $\mathfrak G$-base in the frame of locally convex spaces. Now the concept of $\mathfrak G$-base plays an important role in the study of function spaces, see \cite{B2, CVHT2014, GKL, GKL2, GKL3, GKL1, GKL4, LPT2015}. From \cite{GKL2}, we know that the strong Pytkeev property for general topological groups is closely related to the notion of a $\mathfrak G$-base. For instance, each topological group which is a $k$-space with a $\mathfrak G$-base has the strong Pytkeev property. In Section 4, we shall continuously discuss the properties of free topological groups with a $\mathfrak G$-base, which are motivated by the following interesting Questions~\ref{q0} and \ref{q1}.

 \begin{question}\cite[Question 4.17]{GKL}\label{q0}
Let $X$ be a submetrizable $k_{\omega}$-space. Does the group $F(X)$ have a $\mathfrak G$-base?
\end{question}

\begin{question}\cite[Question 4.15]{GKL}\label{q1}
For which $X$ the groups $A(X)$ and $F(X)$ have a $\mathfrak G$-base?
\end{question}

Recently, S.S. Gabriyelyan and J. K\c{a}kol in the paper \cite{GKL3} and
A.G. Leiderman, V.G. Pestov , A.H. Tomita in the paper \cite{LPT2015} have given an answer to Questions~\ref{q0} and~\ref{q1} respectively.

\maketitle
\section{Notations and Terminology}
In this section, we introduce the necessary notations and terminology. Throughout this paper, all topological spaces are assumed to be
  Tychonoff, unless otherwise is explicitly stated. For undefined notations and terminology, refer to \cite{AT2008}, \cite{E1989} and \cite{Gr}.
  First of all, let $\N$ and $\Q$ denote the sets of positive integers and rational numbers, respectively.

 \medskip
Let $X$ be a topological space and $A \subseteq X$.
The \emph{closure} of a subspace $A$ of $X$ is denoted by $\overline{A}$. The subspace $A$ is called
\emph{bounded} if every continuous real-valued function $f$ defined on the subspace $A$ is bounded. If the closure of every bounded
set in $X$ is compact, then the space $X$ is called \emph{$\mu$}-complete. The space $X$ is called a \emph{$cf$-space} if every compact subset of
  it is finite. The space $X$ is called a
\emph{$k$-space} provided that a subset $C\subseteq X$ is closed in $X$ if
and only if $C\cap K$ is closed in $K$ for each compact subset $K$ of the space $X$. In particular, the space $X$ is called a \emph{$k_{\omega}$-space} if there exists a family of countably many compact subsets $\{K_{n}: n\in\mathbb{N}\}$ of $X$ such that each subset $F$ of the space $X$ is closed in $X$ if and only if $F\cap K_{n}$ is closed in $K_{n}$ for each $n\in\mathbb{N}$. A subset $A$ of the space $X$ is \emph{sequentially open} if each sequence $\{x_{n}\}_{n\in_{\N}}$ in $X$ converging to a point of $A$ is eventually in $A$. The space $X$ is called \emph{sequential} if every sequentially open subset of $X$ is open. A space $X$ is of \emph{countable tightness} if whenever $A\subset X$ and $x\in \overline{A}$, there exists a
countable set $B\subset A$ such that $x\in \overline{B}$. A space $X$ is of \emph{countable fan-tightness} \cite{A1986} if for any countable family $\{A_{n}: n\in\N\}$ of
subsets of $X$ satisfying $x\in\bigcap_{n\in\N}\overline{A_{n}}$, it is possible to select a finite set $K_{n}\subset A_{n}$ for each $n\in\mathbb{N}$, in such
a way that $x\in\overline{\bigcup_{n\in\N} K_{n}}$. A sequence $\{x_n\}$, convergent to a point $x$ of $X$, is called non-trivial, provided all points $x_n$ and $x$ are
mutually distinct.

Let $\mathscr{P}$ be a family of subsets of a space $X$. The family $\mathscr{P}$ is called a {\it network} at a point $x\in X$ if for each open neighborhood $U$ of $x$ in $X$ there exists an element $P\in \mathscr{P}$ such that $x\in P\subset U$. The family $\mathscr P$ is called a
\emph{$cs$-network} \cite{G1971} at a point $x\in X$ if whenever a sequence
$\{x_n: n \in \N\}$ converges to the point $x$ and $U$ is an arbitrary open neighborhood
of the point $x$ in $X$ there exist a number $m\in\N$ and an element $P\in
\mathscr P$ such that
\[
 \{x\}\cup \{x_n: n\geqslant m\} \subseteq P \subseteq U.
 \]
The space $X$ is called \emph{csf-countable} if $X$ has a countable $cs$-network
at each point $x\in X$. We call the family $\mathscr P$ a \emph{$cs^{\ast}$-network} at a point $x\in X$
\cite{LT1994} of $X$ if whenever a sequence $\{x_n: n\in\N\}$ converges to the
point $x$ and $U$ is an arbitrary open neighborhood of the point $x$ in $X$, there
are an element $P\in\mathscr P$ and a subsequence $\{x_{n_{i}}: i\in \N\}$ of
$\{x_n: n\in \N\}$ such that
$\{x\}\cup\{x_{n_{i}}: i\in \mathbb{N}\} \subseteq P\subseteq U$.
Furthermore, the family $\mathscr P$ is called a {\it $k$-network}
\cite{O1971} if whenever $K$ is a compact subset of $X$ and $U\subset X$ is an arbitrary open set containing $K$ then there is a finite subfamily $\mathscr {P}^{\prime}\subseteq
\mathscr {P}$ such that $K\subseteq \bigcup\mathscr {P}^{\prime}\subseteq U$.
Recall that a regular space $X$ is
{\it $\aleph_{0}$} if
$X$ has a countable
$k$-network.
The family $\mathscr{P}$ is called a {\it Pytkeev network} \cite{P1983} at a point $x\in X$ if $\mathscr{P}$ is a network at $x$ and for every open set $U$ in $X$ and a set $A$ accumulating at $x$ there exists $P\in\mathscr{P}$ such that $P\subset U$ and $P\cap A$ is infinite; the family $\mathscr{P}$ is a {\it Pytkeev network} in $X$ if $\mathscr{P}$ is a Pytkeev network at each point $x\in X$. The space $X$ is said to have the \emph{strong Pytkeev property} \cite {TZ2009} if at each point of $X$ there is a countable Pytkeev network. The space $X$ is called {\it $\mathfrak{P}_{0}$} if $X$ is regular and has a countable Pytkeev network.

The following
implications follow directly from definitions. However, none of them can be reversed. By \cite[Proposition 4.1]{GKL4}, we see that a space is first-countable if and only if it has the strong Pytkeev property and is of countable fan-tightness.

\setlength{\unitlength}{1cm}
\begin{picture}(15,3.2)\thicklines
 \put(7.4, -0.7){\vector(1,0){0.7}}
 \put(3.9, 0.5){\vector(1,0){0.7}}
 \put(10.7, -0.7){\vector(-1,0){0.7}}
 \put(5.8,-0.7){\makebox(0,0){sequential space}}
 \put(9,-0.7){\makebox(0,0){$k$-space}}
 \put(6.2,0.5){\makebox(0,0){countable tightness}}
 \put(11.4,-0.7){\makebox(0,0){$k_{\omega}$-space}}
 \put(2,0.5){\makebox(0,0){countable fan-tightness}}
 \put(6.3, -0.5){\vector(0,1){0.7}}
 \put(6.3, 1.6){\vector(0,-1){0.7}}
 \put(5.9,1.7){\makebox(0,0){strong Pytkeev property}}
 \put(9.4,1.7){\makebox(0,0){$\mathfrak{P}_{0}$-space}}
 \put(8.6, 1.7){\vector(-1,0){0.7}}
 \put(9.2, 1.8){\vector(0,1){0.7}}
 \put(9.4,2.8){\makebox(0,0){$\aleph_{0}$-space}}
 \put(8.5, 2.8){\vector(-1,0){0.7}}
 \put(5.8,2.8){\makebox(0,0){countable $cs^{\ast}$-character}}
 \put(6.3, 1.8){\vector(0,1){0.7}}
 \put(2,2.8){\makebox(0,0){$csf$-countable}}
 \put(3.2, 2.9){\vector(1,0){0.7}}
 \put(3.9, 2.7){\vector(-1,0){0.7}}
\end{picture}

\vskip 1cm\setlength{\parindent}{0.5cm}

\begin{definition}\cite{Gr}
A topological space $X$ is a {\it stratifiable space} if $X$ is $T_{1}$ and, to each open $U$ in $X$, on can assign a sequence $\{U_{n}\}_{n=1}^{\infty}$ of open subsets of $X$ such that

(a) $\overline{U_{n}}\subset U$;

(b) $\bigcup_{n=1}^{\infty}U_{n}=U;$

(c) $U_{n}\subset V_{n}$ whenever $U\subset V$.
\end{definition}

{\bf Note:} Clearly, each metrizable space is stratifiable \cite{Gr}.

We consider the product $\N^{\N}$ with the natural partial order, i.e., $\alpha\leq\beta$ if $\alpha_{i}\leq\beta_{i}$ for each $i\in\N$, where $\alpha=(\alpha_{i})_{i\in\N}$ and $\beta=(\beta_{i})_{i\in\N}$. A topological space $(X, \tau)$ has a {\it small base} \cite{GKL1} if there exist a subset $M$ of $\N^{\N}$
 and a family of open subsets $\mathscr{U}=\{U_{\alpha}: \alpha\in M\}$ in $X$ such that $\mathscr{U}$ is a base for $X$ and $U_{\beta}\subset U_{\alpha}$ for all $\alpha, \beta\in M$ with $\alpha\leq\beta$. In particular, we say that $(X, \tau)$ has a $\mathfrak G$-base \cite{F2006} if $M=\N^{\N}$.
If a space has a $\mathfrak G$-base, then it has a countable $cs^{\ast}$-character, see Proposition~\ref{p00}.

 \medskip
  Given a group $G$, the letter $e_G$ denotes the neutral element of $G$. If no
  confusion occurs, we simply use $e$ instead of $e_G$ to denote the neutral
  element of $G$.

  \medskip
  Let $X$ be a non-empty Tychonoff space. Throughout this paper, $X^{-1}
  :=\{x^{-1}: x\in X\}$ and $-X: =\{-x: x\in X\}$, which are copies of
  $X$. Let $e$ be the neutral element of $F(X)$ (i.e., the empty
  word) and $0$ be that of $A(X)$. For every $n\in\N$ and an element
  $(x_{1}, x_{2}, \cdots, x_{n})$ of $(X\bigoplus X^{-1}\bigoplus\{e\})^{n}$
  we call a word $g=x_{1}x_{2}\cdots x_{n}$ a {\it form}. In the Abelian case,
 a word $x_{1}+x_{2}\cdots +x_{n}$ is also called a {\it form} for $(x_{1}, x_{2},
  \cdots, x_{n})\in(X\bigoplus -X\bigoplus\{0\})^{n}$. This word $g$ is
  called {\it reduced} if it does not contains $e$ or any pair of
  consecutive symbols of the form $xx^{-1}$ or $x^{-1}x$. It follows
  that if the word $g$ is reduced and non-empty, then it is different
  from the neutral element $e$ of $F(X)$. In particular, each element
  $g\in F(X)$ distinct from the neutral element can be uniquely written
  in the form $g=x_{1}^{r_{1}}x_{2}^{r_{2}}\cdots x_{n}^{r_{n}}$, where
  $n\geq 1$, $r_{i}\in\mathbb{Z}\setminus\{0\}$, $x_{i}\in X$, and
  $x_{i}\neq x_{i+1}$ for each $i=1, \cdots, n-1$, and the {\it support}
  of $g=x_{1}^{r_{1}} x_{2}^{r_{2}}\cdots x_{n}^{r_{n}}$ is defined as
  $\mbox{supp}(g) :=\{x_{1}, \cdots, x_{n}\}$. Given a subset $K$ of
  $F(X)$, we put $\mbox{supp}(K)=\bigcup_{g\in K}\mbox{supp}(g)$. Similar
  assertions (with the obvious changes for commutativity) are valid for
  $A(X)$. For every $n\in\mathbb{N}$, let $i_n: (X\bigoplus X^{-1}
  \bigoplus\{e\})^{n} \to F_n(X)$ be the natural mapping defined by
  $i_n(x_1, x_2, ... x_n)= x_1x_2...x_n$
  for each $(x_1, x_2, ... x_n) \in (X\bigoplus X^{-1} \bigoplus\{e\})^{n}$.
  We also use the same symbol in the Abelian case, that is, it means
  the natural mapping from $(X\bigoplus (-X)\bigoplus\{0\})^{n}$ onto
  $A_{n}(X)$. Clearly, each $i_n$ is a continuous mapping.

\maketitle
\section{The characterization of countable tightness in free topological groups}
In this section, we mainly discuss the countable tightness and countable fan-tightness of free topological groups. First, we give a characterization of a stratifiable $k$-space $X$ such that $F_8(X)$ has countable tightness (or,
equivalently, $F(X)$ has countable tightness).  Then we show that a space $X$ must belong to some special class of spaces if $F_{4}(X)$ is of countable fan-tightness.

The following theorem generalizes a result in \cite{LLL}.

\begin{theorem}\label{ttightness}
Let $X$ be a stratifiable $k$-space. Then the following are equivalent:
\begin{enumerate}
\item $F_{8}(X)$ is of countable tightness;

\item $F(X)$ is of countable tightness;

\item The space $X$ is separable or discrete.
\end{enumerate}
\end{theorem}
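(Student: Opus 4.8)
The plan is to establish the cycle of implications $(2)\Rightarrow(1)$ (trivial, since $F_8(X)$ is a subspace of $F(X)$ and countable tightness is hereditary to subspaces), and then the substantive implications $(1)\Rightarrow(3)$ and $(3)\Rightarrow(2)$. The direction $(3)\Rightarrow(2)$ is essentially the known half: if $X$ is discrete then $F(X)$ is a free topological group on a discrete space, hence (being a $k_\omega$-space built from the metrizable—indeed discrete—pieces) has countable tightness; if $X$ is separable, then since $X$ is a stratifiable $k$-space it is separable metrizable-like enough that $F(X)$ is separable, and a separable space trivially has the conclusion one needs via the cited result of Li et al.\ in \cite{LLL}. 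So I would quote \cite{LLL} to dispose of $(3)\Rightarrow(2)$ and focus all the real work on $(1)\Rightarrow(3)$.

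For $(1)\Rightarrow(3)$ I would argue by contraposition: assume $X$ is neither separable nor discrete, and produce in $F_8(X)$ a point $g$ in the closure of a set $A$ such that $g$ is in the closure of no countable subset of $A$, thereby witnessing uncountable tightness. Since $X$ is not discrete, it has a non-isolated point $x_0$, and since $X$ is a stratifiable $k$-space that is not separable, it must contain an uncountable closed discrete subspace or, more usefully, a closed copy of an uncountable "fan-like" or large metrizable piece on which one has room to spread out words. The core construction is to choose an uncountable family of points or convergent sequences indexed by an uncountable set, and to build reduced words of length at most $8$ (products of the form $x\,y\,x^{-1}\,y^{-1}$ or $x^{-1}\,s\,x$ and their combinations) whose closure behavior in $F_8(X)$ reflects the large, non-separable structure of $X$ while defeating every countable approximation. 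The length bound $8$ is exactly what is needed so that the relevant multiplication map $i_8$ from $(X\oplus X^{-1}\oplus\{e\})^8$ onto $F_8(X)$ carries enough of these product words; this is why the theorem is stated with $F_8$ rather than a smaller $F_n$.

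The main obstacle, and the heart of the proof, is controlling the topology of $F_8(X)$ well enough to certify that the constructed point lies in the closure of $A$ but outside the closure of every countable subset. This requires the standard but delicate machinery for free topological groups over stratifiable $k$-spaces: one uses that $X$ is paracompact and the $k_\omega$-decomposition or the characterization of convergent sequences and neighborhoods in $F_n(X)$ (via the maps $i_n$ and the behavior of $\mathrm{supp}$) to translate the combinatorial spreading of the words back into a genuine topological accumulation. I expect the hard step to be showing the negative half — that no countable $B\subseteq A$ has $g$ in its closure — which will hinge on a pressing-down or $\Delta$-system argument on the supports of the words in $B$, exploiting non-separability to find an uncountable "escape" direction avoided by any fixed countable family of supports.
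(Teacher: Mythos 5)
Your high-level architecture matches the paper's: $(2)\Rightarrow(1)$ is trivial by heredity, $(2)\Leftrightarrow(3)$ is quoted from \cite{LLL}, and the new content is $(1)\Rightarrow(3)$ by contraposition, starting from a non-trivial convergent sequence $C=\{x_n\}\to x$ and an uncountable closed discrete set $D=\{d_\alpha:\alpha\in\omega_1\}$ (the latter extracted exactly as you suggest, via the stratifiable/$\sigma$-space/cosmic argument). But the two steps you flag as ``the heart of the proof'' are precisely the ones your plan does not supply, and the mechanism you propose for the second one is not the right idea. First, you never exhibit the witnessing set. The paper takes, for each $\alpha$, a function $f_\alpha:\omega_1\to\omega$ with $f_\alpha\upharpoonright_\alpha$ a bijection, and sets $E_{\alpha,\beta}=\{d_\beta x_m x^{-1}d_\beta^{-1}\,d_\alpha x_{f_\alpha(\beta)}x^{-1}d_\alpha^{-1}: m\le f_\alpha(\beta)\}$; the fact that $e\in\overline{\bigcup E_{\alpha,\beta}}$ is itself nontrivial and is imported from the proof of \cite[Proposition 2.2]{Y2005}. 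Saying the words are ``products of the form $xyx^{-1}y^{-1}$ \ldots whose closure behavior reflects the large structure'' does not produce a set whose closure contains $e$; the topology of $F(X)$ near $e$ is exactly the hard part, and no generic length-$8$ family will do.

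Second, for the claim that no countable $B\subset E$ accumulates at $e$, you propose a pressing-down or $\Delta$-system argument on supports. That would only organize the indices; it gives no control over neighborhoods of $e$ in $F_8(X)$. The paper's actual argument is structural: put $Y=\mathrm{supp}(B)\subset C\cup D\cup\{x\}$, use Sipacheva's theorem \cite{S2000} to see that $F(Y,X)\cong F(Y)$ is a closed subgroup, use \cite[Theorem~3.7]{AOP1989} to see $F(Y)$ is a $k$-space, and then show $B$ meets every compact $K\subset F_8(Y)$ in a finite set --- because $\mathrm{supp}(K\cap B)$ is bounded, hence has compact closure in the paracompact space $Y$, yet an infinite $K\cap B$ would force $\mathrm{supp}(K\cap B)$ to contain infinitely many points of the closed discrete set $D$. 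Hence $B$ is closed in the $k$-space $F_8(Y)$ and therefore in $F(X)$, so it cannot capture $e$ in its closure. Note that this argument nowhere needs $B$ to avoid an uncountable ``escape direction''; it works for \emph{every} countable $B$ because countability of the support is what makes $F(Y)$ tractable. Without this reduction to the countably generated closed subgroup and the boundedness-of-supports theorem \cite[Theorem~1.5]{AOP1989}, your plan has no way to certify either half of the closure computation.
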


\begin{proof}
Since the equivalence (2) $\Leftrightarrow$ (3) was proved in \cite{LLL}, it suffices to show that (1) $\Rightarrow$ (3).

Assume that $X$ is neither separable nor discrete. Since each stratifiable space has $G_\delta$-diagonal, by \v Sne\v\i der Theorem \cite{Sn}, each compact subspace of $X$ is metrizable. Thus $X$ is sequential. Since the space $X$ is non-discrete, it contains a non-isolated point $x\in X$.
This means that the set $\{x\}$ is not sequentially open, that is, there exists a non-trivial convergent sequence. Hence take an arbitrary a non-trivial convergent sequence $C:=\{x_{n}: n\in\mathbb{N}\}\subset X$ with a limit point $x$. Moreover, assume that the space $X$ contains no uncountable closed discrete subset. This means that the extend
of the space $X$ is countable. But, by \cite{K1985},
each stratifiable space is a $\sigma$-space, and a $\sigma$-space of countable extent is cosmic
(that is, has countable network), and, therefore, separable. Obtained contradiction shows that there exists
an uncountable discrete closed subset $D:=\{d_{\alpha}: \alpha\in\omega_{1}\}$ of $X$. Without loss of generality, we may assume that $C\cap D=\emptyset$.

 For each $\alpha\in\omega_{1}$ let $f_{\alpha}: \omega_{1}\rightarrow\omega$ be a function such that $f_{\alpha}\upharpoonright_{\alpha}: \alpha\rightarrow\omega$ is a bijection. For any distinct $\alpha, \beta\in\omega_{1}$, put $$E_{\alpha, \beta}:=\{d_{\beta}x_{m}x^{-1}d_{\beta}^{-1}d_{\alpha}x_{f_{\alpha}(\beta)}x^{-1}d_{\alpha}^{-1}: m\leq f_{\alpha}(\beta)\}.$$ Then put $$E:=\bigcup_{\alpha, \beta\in\omega_{1}, \alpha\neq\beta} E_{\alpha, \beta}.$$ Clearly, $e\not\in E$, and $e\in\overline{E}$ by the proof of \cite[Proposition 2.2]{Y2005}. In order to obtain a contradiction, it suffices to show that each countable infinite subset $B\subset E$ is closed in $F(X)$.  Let $Y:=\mbox{supp}(B)$. The set $Y$ contains the point $x$, so $Y\subset C\cup D\cup\{x\}$, which implies that $F(Y)$ is a $k$-space by \cite[Theorem 3.7]{AOP1989}.  Since $X$ is a stratifiable space and $Y$ is closed in $X$, it follows from \cite{S2000} that the subgroup $F(Y, X)$ of $F(X)$ generated  by $Y$ is naturally topologically isomorphic to $F(Y)$. Then $F(Y)$ is a closed $k$-subspace in $F(C\bigoplus D)$. Furthermore, we claim that for each compact subset $K$ of $F_{8}(Y)$, the set $K\cap B$ is finite. Assume on the contrary that there is a compact subset $K$ in $F_{8}(Y)$ such that $K\cap B$ is infinite. Clearly, the set $K\cap B$ is a bounded subset in $F_{8}(Y)$, hence the subspace $\mbox{supp}(K\cap B)$ is bounded in $Y$ by \cite[Theorem 1.5]{AOP1989}. Since the space $Y$ is paracompact, $\overline{\mbox{supp}(K\cap B)}$ is compact in $Y$. However, the set $\mbox{supp}(K\cap B)$ contains infinite many elements $d_{\alpha}'s$ since $B$ is infinite, which is a contradiction with the compactness of the subspace $\overline{\mbox{supp}(K\cap B)}$. Therefore, the subset $B$ is closed in $F_{8}(Y)$, that is, the subset $B$ is closed in $F(X)$. Hence $F_{8}(X)$ is not of countable tightness since $e\in\overline{E}$, which is a contradiction.
\end{proof}

Obviously, we have the following corollary.

\begin{corollary}
Let $X$ be a stratifiable $k$-space. If $F_{8}(X)$ has the strong Pytkeev property, then $X$ is separable or discrete.
\end{corollary}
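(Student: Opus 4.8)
The plan is to observe that this is an immediate corollary of Theorem~\ref{ttightness}, obtained by composing it with one of the implications recorded in the diagram of Section~2. Concretely, the diagram asserts (directly from the definitions) that every space with the strong Pytkeev property is of countable tightness. So the first step is simply: if $F_{8}(X)$ has the strong Pytkeev property, then $F_{8}(X)$ is of countable tightness. The second step is to invoke the implication (1)$\Rightarrow$(3) of Theorem~\ref{ttightness}: for a stratifiable $k$-space $X$, countable tightness of $F_{8}(X)$ forces $X$ to be separable or discrete. Chaining these two steps yields exactly the conclusion.

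For completeness I would recall why the strong Pytkeev property implies countable tightness, since that is the only non-bookkeeping ingredient. Suppose $g\in\overline{A}$ and let $\{P_n:n\in\N\}$ be a countable Pytkeev network at $g$. If $A$ does not accumulate at $g$, then some neighborhood of $g$ meets $A$ only in $g$, whence $g\in A$ and $B=\{g\}$ suffices; otherwise, for each $n$ with $P_n\cap A$ infinite choose a countable infinite $C_n\subseteq P_n\cap A$ and set $B=\bigcup_n C_n$. Given any open $U\ni g$, the Pytkeev condition supplies some $P_m\subseteq U$ with $P_m\cap A$ infinite, whence $\emptyset\neq C_m\subseteq B\cap U$; thus $g\in\overline{B}$ with $B$ countable.

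There is essentially no obstacle here: once the standard implication ``strong Pytkeev $\Rightarrow$ countable tightness'' is in hand, the statement is a two-line consequence of Theorem~\ref{ttightness}. The only care needed is to match the hypotheses precisely --- $X$ is assumed to be a stratifiable $k$-space in both the Corollary and the Theorem, and ``$F_{8}(X)$ of countable tightness'' is exactly clause (1) --- so no additional structural work on free topological groups is required.
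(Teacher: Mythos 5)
Your proposal is correct and matches the paper's intent: the paper states this corollary with no proof beyond the word ``Obviously,'' relying exactly on the chain ``strong Pytkeev property $\Rightarrow$ countable tightness'' (recorded in the diagram of Section~2) followed by implication (1)$\Rightarrow$(3) of Theorem~\ref{ttightness}. Your extra verification of the first implication is sound and harmless, so there is nothing to add.
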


 \begin{remark}\label{r0}
Let $X:=D\bigoplus K$, where $D$ is an uncountable discrete space
  and $K$ is an infinite compact metric space. Then
  $F_{3}(X)$ is first-countable by Theorem 4.5 in \cite{Y1998}, hence it is of countable tightness. However, the space $X$ is not separable and discrete. We do not know whether $F_{4}(X)$ is of countable tightness. Therefore, we have the following Question~\ref{qcountable}.
  \end{remark}

  \begin{question}\label{qcountable}
Let $X:=C\bigoplus D$, where $C$ is a non-trivial convergent sequence with its limit point
  and $D$ is an uncountable discrete space.  For each $n\in\{4, 5, 6, 7\}$, does $F_{n}(X)$ have the countable tightness?
  \end{question}

Furthermore, we also do not know the answer to the following question.

  \begin{question}\label{qf2}
Let $X$ be a space.  If $F_{2}(X)$ is of countable tightness, does $F_{3}(X)$ have the countable tightness?
  \end{question}

It is natural to ask whether Theorem~\ref{ttightness} holds in the class of free Abelian topological groups. Next we shall give a partial answer to this question.

\begin{theorem}\label{abeltightness}
Let $X$ be a stratifiable $k$-space. If $A_{4}(X)$ is of countable tightness, then the set of all non-isolated points of $X$ is a separable subspace in $X$.
\end{theorem}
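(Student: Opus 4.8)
The plan is to prove the contrapositive, following the template of Theorem~\ref{ttightness} but replacing the conjugated length-eight words by length-four Abelian differences. Assume the set $N$ of non-isolated points of $X$ is not separable. Since the isolated points form an open set, $N$ is closed in $X$, and being a subspace of a stratifiable space it is itself stratifiable, hence a $\sigma$-space by \cite{K1985}. As a $\sigma$-space of countable extent would be cosmic and therefore separable, $N$ must contain an uncountable subset $D=\{d_\alpha:\alpha\in\omega_1\}$ that is closed and discrete in $N$, hence in $X$, each of whose points is non-isolated in $X$. Exactly as in Theorem~\ref{ttightness}, the \v Sne\v\i der Theorem \cite{Sn} together with the $k$-space hypothesis makes $X$ sequential, so each $d_\alpha$ is the limit of a non-trivial sequence $S_\alpha=\{a^\alpha_n:n\in\N\}$. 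Because $X$ is stratifiable, hence collectionwise normal, the closed discrete set $\{d_\alpha\}$ can be separated by a discrete family of open sets $\{V_\alpha\}$ with $d_\alpha\in V_\alpha$; after discarding finitely many terms I may assume $S_\alpha\subseteq V_\alpha$. As the $V_\alpha$ are pairwise disjoint, all the points $a^\alpha_n,d_\alpha$ are then distinct for distinct indices, and each $K_\alpha:=S_\alpha\cup\{d_\alpha\}$ is a convergent sequence lying in $V_\alpha$.

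Now fix, for each $\alpha\in\omega_1$, a map $f_\alpha:\omega_1\to\omega$ with $f_\alpha\upharpoonright_\alpha:\alpha\to\omega$ a bijection, and for distinct $\alpha,\beta$ put
\[
E_{\alpha,\beta}:=\{a^\beta_m-d_\beta+a^\alpha_{f_\alpha(\beta)}-d_\alpha: m\le f_\alpha(\beta)\}\subseteq A_4(X),\qquad E:=\bigcup_{\alpha\ne\beta}E_{\alpha,\beta}.
\]
Every element of $E$ is a reduced word of length four distinct from $0$. The first claim, $0\in\overline{E}$, is where the Abelian setting is genuinely easier than in Theorem~\ref{ttightness}: since subtraction is continuous and $a^\alpha_n\to d_\alpha$, for every neighbourhood $U$ of $0$, choosing $V$ with $V+V\subseteq U$, each difference $a^\alpha_n-d_\alpha$ lies in $V$ once $n\ge g(\alpha)$ for some $g(\alpha)\in\omega$. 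A pigeonhole argument on $g:\omega_1\to\omega$ gives an uncountable $W\subseteq\omega_1$ and $k\in\omega$ with $g\equiv k$ on $W$; picking $\alpha\in W$ with $W\cap\alpha$ infinite and using that $f_\alpha\upharpoonright_\alpha$ is onto $\omega$ yields $\beta\in W\cap\alpha$ with $f_\alpha(\beta)\ge k$, whence the word with index $m=k$ has both its differences in $V$ and so lies in $(V+V)\cap E\subseteq U$. Thus $0\in\overline{E}$.

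It remains to show that no countable $B\subseteq E$ satisfies $0\in\overline{B}$. Let $A\subseteq\omega_1$ be the countable set of ordinals occurring in $B$ and set $Y:=\bigcup_{\alpha\in A}K_\alpha$, so $\mathrm{supp}(B)\subseteq Y$. By the discreteness of $\{V_\alpha\}$ each $K_\alpha$ is clopen in $Y$ and $Y$ is closed in $X$, so $Y=\bigoplus_{\alpha\in A}K_\alpha$ is a topological sum of countably many convergent sequences; it is therefore a closed $k_\omega$-subspace of $X$, and $A(Y)$ is a $k_\omega$-space. Since $Y$ is closed in the stratifiable space $X$, the Abelian analogue of \cite{S2000} identifies the subgroup $A(Y,X)$ topologically with $A(Y)$ and shows it is closed in $A(X)$. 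I then run the boundedness argument of Theorem~\ref{ttightness}: any compact $K\subseteq A_4(Y)$ has, by the Abelian form of \cite[Theorem 1.5]{AOP1989}, bounded support in $Y$, and a bounded subset of the topological sum $\bigoplus_{\alpha\in A}K_\alpha$ meets only finitely many summands $K_\alpha$, hence involves only finitely many ordinals. As each $E_{\alpha,\beta}$ is finite, $K\cap B$ is finite, so $B$ is closed in the $k$-space $A(Y)$ and therefore in $A(X)$; in particular $0\notin\overline{B}$. Together with $0\in\overline{E}$ this contradicts the countable tightness of $A_4(X)$, and the theorem follows.

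The main obstacle is not the closure relation $0\in\overline{E}$—which, unlike its conjugation-based counterpart in Theorem~\ref{ttightness}, drops out of continuity and a pigeonhole count—but the bookkeeping that makes the countable pieces closed: one must arrange the approximating sequences inside a discrete family of open sets so that each $Y=\bigcup_{\alpha\in A}K_\alpha$ is an honest topological sum, and hence a closed $k_\omega$-subspace carrying the free topology, and then transfer closedness from $A_4(Y)$ back to $A(X)$ through the identification $A(Y,X)\cong A(Y)$. The function $f_\alpha$ is exactly the device reconciling the two halves: it forces $0$ into the closure of the uncountable set $E$, while the length constraint $m\le f_\alpha(\beta)$ keeps every $E_{\alpha,\beta}$ finite, so that the boundedness argument can strike.
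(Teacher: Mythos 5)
Your proof is correct, and the first half (extracting an uncountable closed discrete set $D$ of non-isolated points via the failure of $\aleph_1$-compactness of a non-separable stratifiable space, separating it by a discrete open family, attaching a non-trivial convergent sequence $K_\alpha=S_\alpha\cup\{d_\alpha\}$ to each $d_\alpha$ using sequentiality, and passing to the closed subsum via Sipacheva's theorem) coincides with the paper's argument almost verbatim. Where you genuinely diverge is in the decisive step: the paper, having identified $A_4(C(\omega_1))$ with $A(C(\omega_1),X)\cap A_4(X)$ for $C(\omega_1)=\bigoplus_{\alpha<\omega_1}K_\alpha$, simply cites \cite[Theorem 3.2]{Y1997} together with \cite{G1982} to conclude that $A_4(C(\omega_1))$ has uncountable tightness, whereas you prove that fact from scratch by transplanting the $E_{\alpha,\beta}$ construction of Theorem~\ref{ttightness} (ultimately Yamada's $i_n$-machinery) into the Abelian setting: the set $E=\bigcup_{\alpha\neq\beta}E_{\alpha,\beta}$ of length-four differences, the $V+V\subseteq U$ plus pigeonhole argument for $0\in\overline E$, and the boundedness-of-support argument showing every countable $B\subseteq E$ is closed because compact subsets of $A(\bigoplus_{\alpha\in A}K_\alpha)$ have support meeting only finitely many summands while each $E_{\alpha,\beta}$ is finite by the constraint $m\le f_\alpha(\beta)$. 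Your route is longer but self-contained and makes the witnessing set explicit; it is in effect a direct reproof of the Yamada/Gruenhage--Tanaka input (which rests on the uncountable tightness of $S_{\omega_1}\times S_{\omega_1}$ via the same injective-enumeration combinatorics), and it has the side benefit of unifying the Abelian theorem with the non-Abelian Theorem~\ref{ttightness} under one template. The paper's route is shorter and isolates exactly which external facts are being used. I found no gaps: the reduction $0\in\overline{E}^{A_4(X)}\iff 0\in\overline{E}^{A(X)}$ is legitimate since $A_4(X)$ is closed in $A(X)$, all four letters of each word in $E$ are distinct (so $0\notin E$), and your use of a discrete (not merely disjoint) open expansion, available by collectionwise normality, is precisely what is needed to make $Y$ an honest topological sum --- a point the paper itself glosses over.
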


\begin{proof}
Assume on the contrary that the set of all non-isolated points of $X$ is not separable. Then the set of all non-isolated points of $X$ is not an $\aleph_{1}$-compact space since $X$ is a stratifiable space \cite{Gr}. Therefore, there exists an uncountable closed discrete subset $D:=\{d_{\alpha}: \alpha<\omega_{1}\}$ in $X$, where each $d_{\alpha}$ is a non-isolated point in $X$. Moreover, since $X$ is a stratifiable $k$-space, it is paracompact (and, hence, collectionwise normal by ~\cite[Th.5.1.17]{E1989}) and sequential. Hence there exists a family of mutually disjoint open subsets $\{U_{\alpha}: \alpha<\omega_{1}\}$ of $X$ such that $d_{\alpha}\in U_{\alpha}$ for each $\alpha<\omega_{1}$. Moreover, for each $\alpha<\omega_{1}$ we can take a non-trivial sequence
$\{d_{\alpha}(n): n\in\N\}\subset U_{\alpha}$ convergent to a point $d_{\alpha}$. Let $C(\omega_{1}):=\bigoplus_{\alpha<\omega_{1}}(\{d_{\alpha}(n): n\in\N\}\cup\{d_{\alpha}\})$. Since $X$ is stratifiable and $C(\omega_{1})$ is closed in $X$, it follows from \cite{S2000} that the subgroup $A(C(\omega_{1}), X)$ of $A(X)$ generated  by $C(\omega_{1})$ is naturally topologically isomorphic to the free Abelian topological group $A(C(\omega_{1}))$. Then $A_{4}(C(\omega_{1}))$ is topologically isomorphic to $A(C(\omega_{1}), X)\cap A_{4}(X)$, which implies that the tightness of $A_{4}(C(\omega_{1}))$ is countable. However, it follows from \cite[Theorem 3.2]{Y1997} and \cite{G1982} that the tightness of $A_{4}(C(\omega_{1}))$ is uncountable, which is a contradiction.
\end{proof}

However, the converse of Theorem~\ref{abeltightness} does not hold, see \cite[Theorem 3.2]{Y1997}.

Moreover, the proof of the following result is similar to \cite[Proposition 2.2]{Y2005}, and thus, the proof is omitted in this paper.

\begin{theorem}
For a stratifiable space $X$, if $F_{8}(X)$ is a $k$-space, then $X$ is separable or discrete.
\end{theorem}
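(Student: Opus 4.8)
The plan is to mimic the structure of the proof of Theorem~\ref{ttightness}, exploiting the fact that the strong Pytkeev property, $k$-space-ness, and countable tightness are all tightly linked. The statement to prove is: if $X$ is a stratifiable space and $F_8(X)$ is a $k$-space, then $X$ is separable or discrete. Since the excerpt notes this proof is ``similar to \cite[Proposition 2.2]{Y2005}'' and is omitted, my reconstruction should follow the same skeleton as the proof of Theorem~\ref{ttightness}, replacing the countable-tightness hypothesis with the $k$-space hypothesis. First I would argue by contradiction, assuming $X$ is neither separable nor discrete. As in Theorem~\ref{ttightness}, stratifiability gives a $G_\delta$-diagonal, so by \v Sne\v\i der's Theorem every compact subspace of $X$ is metrizable, whence $X$ is sequential; non-discreteness then yields a non-trivial convergent sequence $C = \{x_n : n\in\N\}$ with limit $x$. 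Likewise, the $\sigma$-space argument via \cite{K1985} shows that a stratifiable, non-separable space cannot have countable extent, so there is an uncountable closed discrete subset $D = \{d_\alpha : \alpha \in \omega_1\}$, which we may take disjoint from $C$.

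Next I would reuse the \emph{same} combinatorial set $E = \bigcup_{\alpha\neq\beta} E_{\alpha,\beta}$ built from the functions $f_\alpha$, for which $e\notin E$ but $e\in\overline{E}$ (by the proof of \cite[Proposition 2.2]{Y2005}). The key point that makes $E$ useful is already established inside Theorem~\ref{ttightness}: for any countable infinite $B\subset E$, setting $Y := \mathrm{supp}(B)\subset C\cup D\cup\{x\}$, the subgroup $F(Y,X)$ is topologically isomorphic to $F(Y)$ (by \cite{S2000}, using that $Y$ is closed in the stratifiable $X$), and $B$ is closed in $F_8(Y)$, hence closed in $F(X)$. The crucial observation for the present theorem is that, for a $k$-space, closedness can be tested against compact sets. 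Since each countable $B\subset E$ is closed in $F(X)$, and since the argument of Theorem~\ref{ttightness} in fact shows $K\cap B$ is finite for every compact $K\subset F_8(Y)$, I would leverage that $E$ itself meets every compact subset of $F_8(X)$ in a finite set, so that $E$ is closed in the $k$-space $F_8(X)$; this contradicts $e\in\overline{E}$.

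More precisely, the heart of the matter is to show $E$ is closed in $F_8(X)$ by verifying $E\cap K$ is finite (hence closed) for each compact $K\subset F_8(X)$. Given such $K$, I would argue that $\mathrm{supp}(K)$ is bounded in $X$ (via \cite[Theorem 1.5]{AOP1989}), so $\overline{\mathrm{supp}(K)}$ is compact by paracompactness of $X$; since the elements of $E$ involve the uncountably many distinct $d_\alpha$, and $E\cap K$ draws its supports from the compact metrizable set $\overline{\mathrm{supp}(K)}$, only finitely many indices $\alpha,\beta$ can contribute, forcing $E\cap K$ finite. Because $F_8(X)$ is a $k$-space, $E\cap K$ closed in each compact $K$ gives $E$ closed in $F_8(X)$, contradicting $e\in\overline{E}\setminus E$.

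I expect the main obstacle to be the bridge between ``each countable $B\subset E$ is closed'' (which is exactly what Theorem~\ref{ttightness} gives) and ``$E\cap K$ is finite for every compact $K$'' (which is what the $k$-space hypothesis needs). The cleanest route is to prove the support-compactness claim directly for an arbitrary compact $K\subset F_8(X)$, rather than routing through the subgroup $F(Y,X)$ as in Theorem~\ref{ttightness}; the reduction to a fixed closed metrizable $Y$ was a convenience there for controlling the ambient group, and I would need to check that the boundedness-of-support result applies to compact subsets of $F_8(X)$ itself. The delicate step is ensuring that a compact $K\subset F_8(X)$ really does have bounded (hence relatively compact, hence metrizable) support in $X$, so that it can absorb at most finitely many of the $d_\alpha$; once that is in hand, the $k$-space property converts ``finite intersection with compacta'' into the desired closedness of $E$.
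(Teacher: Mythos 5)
Your reconstruction is essentially the intended proof: the paper omits its argument, referring to \cite[Proposition 2.2]{Y2005}, and your route --- build the set $E$ from the proof of Theorem~\ref{ttightness}, check that $E$ meets every compact subset of $F_8(X)$ in a finite set, and conclude from the $k$-space hypothesis that $E$ is closed, contradicting $e\in\overline{E}\setminus E$ --- is exactly what that reference and the proof of Theorem~\ref{ttightness} suggest. Two small points need patching. First, ``every compact subspace of $X$ is metrizable, whence $X$ is sequential'' is not valid for a bare stratifiable space; here you must first observe that $X$ is closed in $F_8(X)$ and therefore inherits the $k$-property from the hypothesis, after which metrizability of compacta does yield sequentiality and hence the convergent sequence $C$. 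Second, the ``delicate step'' you flag (applying the boundedness-of-support result of \cite{AOP1989} to compacta of $F_8(X)$ for a general stratifiable $X$) is easily dissolved: since $E\subset F_8(Y,X)$ with $Y:=C\cup D\cup\{x\}$ closed and metrizable, and $F(Y,X)\cong F(Y)$ is closed in $F(X)$ by \cite{S2000}, one may replace an arbitrary compact $K\subset F_8(X)$ by the compact set $K\cap F_8(Y,X)$ and argue inside $F_8(Y)$ exactly as in Theorem~\ref{ttightness}; then $\overline{\mathrm{supp}(K\cap F_8(Y,X))}$ is compact, meets the closed discrete set $D$ in a finite set, so only finitely many pairs $(\alpha,\beta)$ contribute, and each $E_{\alpha,\beta}$ is finite, giving $E\cap K$ finite as you claim.
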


Next, we shall discuss the countable fan-tightness in free topological groups. In contrast to Theorem~\ref{ttightness}, we shall find that the situation changes dramatically for the countable fan-tightness in free topological groups. First, we shall give a characterization of a space $X$ such that $G(X)$ has the countable fan-tightness.

\begin{theorem}
Let $X$  be a space. Then $G(X)$ has the countable fan-tightness if and only if the space $X$ is discrete.
\end{theorem}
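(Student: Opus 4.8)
For the \emph{easy} direction, if $X$ is discrete then, as recalled in the Introduction, $G(X)$ is first-countable \cite{G1962}; and, as noted after the implication diagram, every first-countable space has countable fan-tightness (being first-countable is equivalent to having the strong Pytkeev property \emph{and} countable fan-tightness). Hence $G(X)$ has countable fan-tightness. The substance is the reverse implication, which I would prove by contraposition: assuming $X$ is \emph{not} discrete, I produce a countable family witnessing the failure of countable fan-tightness at the neutral element $e$.

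Fix a non-isolated point $x\in X$, so that $x\in\overline{X\setminus\{x\}}$. For each $n\in\N$ put
$$A_n:=\{(xy^{-1})^n : y\in X\setminus\{x\}\}$$
(in the Abelian case $A_n:=\{n(x-y):y\in X\setminus\{x\}\}$). Since $y\mapsto(xy^{-1})^n$ is a continuous map $X\to G(X)$ sending $x$ to $e$, and $x\in\overline{X\setminus\{x\}}$, we obtain $e\in\overline{A_n}$ for every $n$. Thus $e\in\bigcap_n\overline{A_n}$, and $\{A_n\}$ is a legitimate test family for fan-tightness at $e$. It then remains to show that no finite selection works.

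So let $K_n\subseteq A_n$ be an arbitrary finite set, determined by a finite set $Y_n\subseteq X\setminus\{x\}$, and set $B:=\bigcup_n K_n$. The goal is an open neighborhood $W$ of $e$ with $W\cap B=\emptyset$, giving $e\notin\overline{B}$. Here lies the step I expect to be the \emph{main obstacle}: for a general Tychonoff $X$ the free topology at $e$ admits no manageable neighborhood base, so building $W$ by hand inside $G(X)$ is hopeless. I would instead pull $W$ back through a continuous homomorphism into $\R$. Concretely, I would construct a continuous function $f:X\to[0,1]$ with $f(x)=0$ and $f(y)\geq 1/n$ for all $y\in Y_n$ and all $n$: for each $n$ complete regularity yields a continuous $h_n:X\to[0,1]$ with $h_n(x)=0$ and $h_n\equiv 1$ on the finite set $Y_n$, and then $f:=\sup_n(h_n/n)$ does the job. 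Its continuity is the one calculation worth checking, and it follows from the uniform estimate $F_N\leq f\leq F_N+1/N$, where $F_N:=\max_{n<N}(h_n/n)$ is continuous; hence $f$ is a uniform limit of continuous functions.

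Finally, let $\bar f:G(X)\to(\R,+)$ be the continuous homomorphism extending $f$ (universal property of $G(X)$; the target is Abelian, so this covers $F(X)$ as well). Since $\bar f$ is a homomorphism and $f(x)=0$, each element of $K_n$ satisfies $\bar f((xy^{-1})^n)=n(f(x)-f(y))=-nf(y)$, whence $|\bar f(w)|\geq n\cdot(1/n)=1$ for every $w\in K_n$. Therefore $W:=\bar f^{-1}((-1,1))$ is an open neighborhood of $e$ disjoint from $B$, so $e\notin\overline{B}$. As $\{K_n\}$ was an arbitrary finite selection, $G(X)$ fails countable fan-tightness at $e$, completing the contrapositive. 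Note that the same $A_n$, $f$, and $\bar f$ serve $F(X)$ and $A(X)$ verbatim, so the whole argument is uniform in $G$ and needs no extra hypotheses on $X$ beyond being Tychonoff.
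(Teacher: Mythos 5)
Your proof is correct, but it takes a different route from the paper's. The paper tests fan-tightness at $e$ with the ``tails'' $B_n:=\bigcup_{i\geq n}\bigl(F_i(X)\setminus F_{i-1}(X)\bigr)$: it notes $e\in\bigcap_n\overline{B_n}$ (asserted as clear; your observation that $y\mapsto (xy^{-1})^n$ is continuous and sends $x$ to $e$ is in effect the missing justification), and then kills any finite selection $D_n\subset B_n$ at one stroke, since $\bigcup_n D_n$ meets each $F_m(X)$ in a finite set and is therefore closed and discrete in $G(X)$ by \cite[Corollary 7.4.3]{AT2008}. You instead use the concrete test family $A_n=\{(xy^{-1})^n:y\in X\setminus\{x\}\}$ and separate $e$ from the selection by hand: a continuous $f:X\to[0,1]$ vanishing at $x$ with $f\geq 1/n$ on the finite set $Y_n$ (your uniform-limit argument for the continuity of $\sup_n(h_n/n)$ is the one delicate point, and it is right), extended to a continuous homomorphism $\bar f:G(X)\to(\R,+)$, so that $|\bar f|\geq 1$ on every $K_n$. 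The paper's argument is shorter because it outsources the closedness to the graded structure of $G(X)$ and a standard corollary; yours is more elementary and self-contained, explicitly exhibits the separating neighborhood $\bar f^{-1}((-1,1))$, and makes transparent why the argument is uniform in $F(X)$ and $A(X)$. Both establish the same statement with no extra hypotheses.
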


\begin{proof}
We only consider $F(X)$, as the proof of the case of $A(X)$ is quite similar. Since the sufficiency is obvious, we shall prove only the necessity. In order to obtain a contradiction, assume the converse. Suppose that $F(X)$ has a countable fan-tightness
but the space $X$ is not discrete. Clearly, $e\in\bigcap_{n\in\N}\overline{(\bigcup_{i\geq n}(F_{i}(X))\setminus F_{i-1}(X)))}$. However, if for each natural $n$ we take an arbitrary finite subset $D_{n}\subset \bigcup_{i\geq n}(F_{i}(X))\setminus F_{i-1}(X)))$,
then an intersection $(\bigcup_{n\in\N}D_{n})\cap F_{m}(X)$ will be finite for each natural $m$. Therefore $\bigcup_{n\in\N}D_{n}$ is closed and discrete in $F(X)$ by \cite[Corollary 7.4.3]{AT2008}. Thus $e\not\in\overline{\bigcup_{n\in\N}D_{n}}$, a contradiction.
\end{proof}

It turns out that the countable fan-tightness of $F_{4}(X)$ imposes strong restrictions on the space $X$. Recall that a subspace $Y$ of a space $X$ is said to be {\it P-embedded in $X$} if each continuous pseudometric
on $Y$ admits a continuous extensions over $X$.

\begin{theorem}\label{tcountable fan}
Let $X$ be a space. If $F_{4}(X)$ has the countable fan-tightness, then $X$ is either pseudocompact or a $cf$-space.
\end{theorem}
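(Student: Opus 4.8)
The plan is to prove the contrapositive: assuming $F_4(X)$ has countable fan-tightness, I will show that if $X$ is \emph{not} a $cf$-space then $X$ must be pseudocompact. So suppose for contradiction that $F_4(X)$ is countably fan-tight, that $X$ contains an infinite compact set $K$, and that $X$ is \emph{not} pseudocompact. From $K$ I extract a countably infinite subset $\{a_m:m\in\N\}\subseteq K$ together with an accumulation point $a\in K$ (every infinite compact Hausdorff space has one), with $a\notin\{a_m\}$. From non-pseudocompactness I extract a locally finite family $\{U_n:n\in\N\}$ of nonempty open sets and points $y_n\in U_n$; after shrinking I may assume the $U_n$ are pairwise disjoint, miss $K$, and satisfy $\overline{U_k}\cap\{y_j:j\neq k\}=\emptyset$, so that $\{y_n\}$ is closed and discrete with $\{y_n\}\cap K=\emptyset$, and I fix continuous functions $\eta_n\colon X\to[0,1]$ with $\eta_k(y_n)=\delta_{kn}$, $\operatorname{supp}\eta_n\subseteq U_n$, and $\sum_n\eta_n\equiv 0$ on $K$.

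The witnessing family is
\[
A_n:=\{\,y_n^{-1}a_m a^{-1}y_n:\ m\in\N\,\}\subseteq F_4(X),\qquad n\in\N .
\]
Since $a\in\overline{\{a_m\}}$, the map $(u,v)\mapsto uv^{-1}$ carries $(a,a)\in\overline{\{(a_m,a):m\}}$ to $e$, so $e\in\overline{\{a_m a^{-1}:m\}}$; as conjugation by $y_n$ is a homeomorphism fixing $e$, this gives $e\in\overline{A_n}$ for every $n$, hence $e\in\bigcap_n\overline{A_n}$. Countable fan-tightness then yields finite sets $F_n\subseteq A_n$, say $F_n=\{y_n^{-1}a_m a^{-1}y_n:m\in S_n\}$ with $S_n\subseteq\N$ finite, such that $e\in\overline{\bigcup_n F_n}$. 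The whole game is now to contradict this last statement.

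The crux is to show that \emph{no} such finite selection accumulates at $e$. The obstruction is that a single fixed continuous homomorphism cannot detect this: the abelianization (and every bi-invariant, i.e.\ Graev, pseudometric) kills the conjugating letters $y_n$, and the adversary can make each inner factor $a_m a^{-1}$ arbitrarily close to $e$ by choosing $m$ near the accumulation point. The resolution exploits that we may choose the detecting homomorphism \emph{after} the selection is revealed, together with the fact that conjugation by a dilation in the affine group amplifies translations. Concretely, work in $\mathrm{Aff}(\R)=\{\,x\mapsto \lambda x+s:\lambda>0,\ s\in\R\,\}$, writing $T_s$ for the translation $x\mapsto x+s$ and $D_\lambda$ for the dilation $x\mapsto\lambda x$, so that $D_\lambda^{-1}T_sD_\lambda=T_{s/\lambda}$. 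Using a Baire category argument in $C^*(X)$ (each $\{\tau:\tau(a_m)\neq\tau(a)\}$ is open and dense), fix $\tau\in C^*(X)$ with $\epsilon_m:=|\tau(a_m)-\tau(a)|>0$ for all $m$. Define $h\colon X\to\mathrm{Aff}(\R)$ by the multiplier $a(x)=\exp\!\big(\sum_n(\ln\lambda_n)\eta_n(x)\big)$ and shift $b(x)=\tau(x)\big(1-\sum_n\eta_n(x)\big)$, where the scalars $\lambda_n>0$ are chosen, once $S_n$ is known, as $\lambda_n:=\min_{m\in S_n}\epsilon_m$. Then $h$ is continuous (local finiteness of $\{\operatorname{supp}\eta_n\}$), with $h(y_n)=D_{\lambda_n}$ and $h|_K=T_{\tau(\cdot)}$, and it extends by the universal property to a continuous homomorphism $\bar h\colon F(X)\to\mathrm{Aff}(\R)$ with
\[
\bar h\big(y_n^{-1}a_m a^{-1}y_n\big)=D_{\lambda_n}^{-1}T_{\tau(a_m)-\tau(a)}D_{\lambda_n}=T_{(\tau(a_m)-\tau(a))/\lambda_n}.
\]
For every $n$ and every $m\in S_n$ the translation amount has absolute value $\epsilon_m/\lambda_n\geq 1$, so $\bar h(\bigcup_n F_n)$ avoids the unit ball about $e_{\mathrm{Aff}}$; by continuity of $\bar h$ this forces $e\notin\overline{\bigcup_n F_n}$, contradicting fan-tightness. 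The main obstacle is precisely this detection step, and the decisive point is the post-selection tailoring of the dilation scales $\lambda_n$, which converts the fixed nonzero inner factors at level $n$ into translations of size at least one.
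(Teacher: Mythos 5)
Your argument is correct, and it reaches the conclusion by a genuinely different route than the paper. Both proofs start the same way (an infinite compact set $K$ with an accumulation point, a locally finite/discrete family $\{U_n\}$ with chosen points $y_n$ witnessing non-pseudocompactness, and the length-four conjugates $y_n^{-1}a_ma^{-1}y_n$ as the sets to which countable fan-tightness is applied at $e$), but they diverge at the decisive step of showing that no finite selection accumulates at $e$. The paper passes to the closed $\sigma$-compact subspace $Y=C\cup\{y_n:n\in\N\}$, invokes Sipacheva's theorem that the $P$-embedded subspace $Y$ generates a subgroup topologically isomorphic to $F(Y)$, and then uses the $k_\omega$-structure of $F(Y)$ (its topology is determined by the compact sets $K_n^{\,n}$) to conclude that the union of the finite selections meets each determining compact set in a finite set and is therefore closed and discrete. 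You instead exhibit, \emph{after} the selection is revealed, a single continuous homomorphism $F(X)\to\mathrm{Aff}(\R)$ -- dilations $D_{\lambda_n}$ at the points $y_n$ with scales $\lambda_n$ tailored to the selection, translations $T_{\tau(\cdot)}$ on $K$ -- under which every selected word becomes a translation of magnitude at least one, so the selection misses a fixed identity neighborhood. Your approach buys independence from the Sipacheva embedding theorem and from any $k_\omega$ or $P$-embedding machinery, using only the universal property of $F(X)$ and elementary point-set facts (the Baire argument producing $\tau$ with $\tau(a_m)\neq\tau(a)$ for all $m$ could even be replaced by a direct construction); the paper's approach is shorter once those structural theorems are granted and localizes the whole obstruction inside a single closed copy of $F_4(Y)$. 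The only cosmetic caveats in your write-up are that $\lambda_n$ should be set to $1$ when $S_n=\emptyset$, and that the condition $\overline{U_k}\cap\{y_j:j\neq k\}=\emptyset$ is not actually needed -- local finiteness of the supports of the $\eta_n$ already gives continuity of $h$; neither affects correctness.
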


\begin{proof}
Suppose $X$ is not a $cf$-space. Then there exists an infinite compact subset $C$ in $X$.
Next we shall show that $X$ is pseudocompact.

Assume the converse. Then there exists a discrete family $\{U_{n}: n\in\N\}$ of non-empty open subsets of the space $X$. It can be easily verified that the family $\{\overline{U_{n}}: n\in\N\}$ is also discrete, hence $\bigcup_{n\in\N}\overline{U_{n}}$ is closed in $X$. Since the set $C$ is compact, it can intersect at most finitely many $\overline{U_{n}}$'s. Thus without loss of generality, we may assume that $C\cap \bigcup_{n\in\N}\overline{U_{n}}=\emptyset$. Then the family $\{C\}\cup\{U_{n}: n\in\N\}$ is discrete in $X$.

Since $C$ is an infinite compact set, it contains a non-isolated point $x$. For each $n\in\N$, pick $y_{n}\in U_{n}$, and put $C_{n}:=y_{n}^{-1}x^{-1}C y_{n}$. Let $$Y:=C\cup\{y_{n}: n\in\N\}, \mbox{and}\ Z:=\bigcup_{n\in\N} C_{n}.$$

Obviously, the set $Y$ is closed, $\sigma$-compact and non-discrete in $X$. Moreover, $Y$ is $P$-embedded in $X$ by \cite{T1984}. By \cite{S2000}, the subgroup $F(Y, X)$ of $F(X)$ generated  by $Y$ is naturally topologically isomorphic to $F(Y)$. Moreover, since $Y$ is a $k_{\omega}$-space, it follows from \cite[Theorem 7.4.1]{AT2008} that $F(Y)$ is also a $k_{\omega}$-space. Hence $F_{4}(Y)$ is also a $k_{\omega}$-space. Next, we claim that $Z$ is closed in $F_{4}(Y)$.

Indeed, for each $n\in \mathbb{N}$, let $$K_{n}:=C\cup C^{-1}\cup\{y_{i}, y_{i}^{-1}: i\leq n\}\cup\{e\}$$and $$X_{n}:=\overbrace{K_{n}\cdot\ldots \cdot K_{n}}^{n}.$$Then it follows from the proof of \cite[Theorem 7.4.1]{AT2008} that the topology of $F(Y)$ is determined by the family of compact subsets $\{X_{n}: n\in\mathbb{N}\}$, hence the topology of $F_{4}(Y)$ is determined by the family $\{X_{n}\cap F_{4}(Y): n\in\mathbb{N}\}$. Therefore, we have $Z\cap X_{n}\cap F_{4}(Y)=\bigcup_{i=1}^{n}C_{i}$ for each $n\in\mathbb{N}$, which shows that $Z$ is closed in $F_{4}(Y)$. Therefore, $Z$ has the countable fan-tightness. Moreover, it is easy to see that $e\in\overline{C_{n}\setminus\{e\}}$ for each $n\in\N$, that is, $e\in\bigcap_{n\in\N}\overline{C_{n}\setminus\{e\}}$. Now, if for each natural $n$, $F_{n}\subset C_{n}\setminus\{e\}$ is a finite subset,
then $e\not\in\overline{\bigcup_{n\in\N}F_{n}}=\bigcup_{n\in\N}F_{n}$, because the topology of $Z$
is determined by the family $\{C_{n}: n\in\mathbb{N}\}$, a contradiction.

\end{proof}

\begin{corollary}
Let $X$ be a $\mu$- and $k$-space. If $F_{4}(X)$ has the countable fan-tightness, then $X$ is either compact or discrete.
\end{corollary}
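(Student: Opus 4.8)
The plan is to deduce this directly from Theorem~\ref{tcountable fan} and then sharpen its dichotomy using the two extra hypotheses. By that theorem, the countable fan-tightness of $F_{4}(X)$ forces $X$ to be either pseudocompact or a $cf$-space, so I would split the argument into these two cases and show that the first yields compactness and the second yields discreteness.

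First, suppose $X$ is pseudocompact. By definition every continuous real-valued function on $X$ is bounded, which is exactly the statement that $X$, regarded as a subset of itself, is bounded. Since $X$ is assumed to be a $\mu$-space, the closure of every bounded subset is compact; as $X$ is trivially closed in itself, I conclude that $X=\overline{X}$ is compact.

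Next, suppose $X$ is a $cf$-space, so that every compact subset of $X$ is finite. Here is where I would use that $X$ is a $k$-space. Let $A\subseteq X$ be arbitrary. For every compact set $K\subseteq X$ the set $K$ is finite, so $A\cap K$ is finite; since $X$ is Tychonoff (in particular $T_{1}$), finite sets are closed, and hence $A\cap K$ is closed in $K$. Because $X$ is a $k$-space, this shows that $A$ is closed in $X$. As $A$ was arbitrary, every subset of $X$ is closed, i.e.\ $X$ is discrete. Combining the two cases gives the desired conclusion.

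I do not expect any genuine obstacle here, since the statement is an immediate consequence of the preceding theorem; the only points to keep straight are conceptual rather than technical. Namely, one must recognize that pseudocompactness is precisely self-boundedness, so that the $\mu$-space property converts it into compactness, and that in a $k$-space the $cf$ condition collapses to discreteness because finite sets are closed in any $T_{1}$ space. Once these two observations are made, no further computation is needed.
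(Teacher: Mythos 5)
Your proposal is correct and is precisely the intended deduction: the paper states this corollary without proof immediately after Theorem~\ref{tcountable fan}, and your two observations (pseudocompact $=$ bounded-in-itself, so the $\mu$-property gives compactness; and in a $T_1$ $k$-space the $cf$-condition makes every subset closed, hence the space discrete) are exactly what is needed to upgrade the theorem's dichotomy.
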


\begin{remark}
By Theorems~\ref{ttightness} and~\ref{tcountable fan}, it is easy to see that $F(C\bigoplus D)$ has the countable tightness and $F_{4}(C\bigoplus D)$ does not have the countable fan-tightness, where $C$ is a non-trivial convergent sequence with its limit point and $D$ is a countable infinite discrete space. However, $F_{3}(C\bigoplus D)$ has the countable fan-tightness by Remark~\ref{r0}. Therefore, we can not replace ``$F_{4}(X)$'' with ``$F_{3}(X)$'' in Theorem~\ref{tcountable fan}. Furthermore, $F(C\bigoplus D)$ is a $\mathfrak{P}_{0}$-space since $F(C\bigoplus D)$ is an $k$-space with a countable $k$-network.
\end{remark}

Finally, we shall discuss the strong Pytkeev property in free topological groups. It is well known that if a space has the strong Pytkeev property then it is of countable tightness and is $csf$-countable. In \cite{B1}, the authors showed that a space is first-countable if and only if it has the strong Pytkeev property and countable fan-tightness. Therefore, it is interesting to discuss the strong Pytkeev property on free topological groups. First, we shall give a theorem which has just been proved in \cite{LL}.

 Recall that a space $X$ is said to be
  \emph{La\v{s}nev} if it is the closed image of some metric space.

\begin{theorem}\cite{LL}\label{t00}
 Let $X$ be a non-discrete La\v{s}nev space. Then $F_{4}(X)$ is of $csf$-countable
if and only if $F(X)$ is an $\aleph_0$-space.
\end{theorem}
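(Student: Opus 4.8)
The plan is to establish the two implications separately, the reverse being routine and the forward one carrying all the work. For the sufficiency, suppose $F(X)$ is an $\aleph_{0}$-space. By the chain of implications recorded in Section~2, an $\aleph_{0}$-space is a $\mathfrak{P}_{0}$-space, hence has the strong Pytkeev property, hence is $csf$-countable. Since $csf$-countability is inherited by every subspace (if $\mathscr{P}$ is a countable $cs$-network at a point $x$ of a space and $x\in Y$, then $\{P\cap Y:P\in\mathscr{P}\}$ is a countable $cs$-network at $x$ in $Y$), the subspace $F_{4}(X)\subseteq F(X)$ is $csf$-countable.

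For the necessity I would argue by contraposition, first reducing it to a statement about separability. Recall that, for a La\v{s}nev space $X$, the group $F(X)$ is an $\aleph_{0}$-space if and only if $X$ is separable: indeed $X$ embeds as a closed subspace of $F(X)$ and the $\aleph_{0}$-property is hereditary, so $F(X)$ being $\aleph_{0}$ forces $X$ to be $\aleph_{0}$ and hence separable; conversely a separable La\v{s}nev space is a separable $\sigma$-space, therefore cosmic and in fact an $\aleph_{0}$-space, and the free topological group of an $\aleph_{0}$-space is again an $\aleph_{0}$-space. Thus it suffices to prove that if $X$ is a non-discrete, non-separable La\v{s}nev space, then $F_{4}(X)$ is not $csf$-countable. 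Since a La\v{s}nev space is a paracompact $\sigma$-space and a $\sigma$-space of countable extent is cosmic (hence separable), non-separability of $X$ yields an uncountable closed discrete subspace $D=\{d_{\alpha}:\alpha<\omega_{1}\}$; non-discreteness of $X$ yields a non-trivial convergent sequence $C=\{c_{n}:n\in\N\}$ with limit $c$, and we may assume $C\cap D=\emptyset$.

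Next I would carry out a fan construction at the neutral element, in the spirit of the proofs of Theorems~\ref{ttightness} and~\ref{tcountable fan}. Separating the compact set $C\cup\{c\}$ from the closed discrete set $D$ shows that $Y:=D\cup C\cup\{c\}$ is the topological sum of an uncountable discrete space and a convergent sequence, in particular a non-separable metrizable (hence stratifiable) space that is closed in $X$; by \cite{S2000} the subgroup $F(Y,X)$ is topologically isomorphic to $F(Y)$, so $F_{4}(Y)$ is homeomorphic to a subspace of $F_{4}(X)$. For $\alpha<\omega_{1}$ and $n\in\N$ set $w^{n}_{\alpha}:=d_{\alpha}c_{n}c^{-1}d_{\alpha}^{-1}$, a reduced word of length $4$ lying in $F_{4}(Y)$. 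For each fixed $\alpha$ one has $w^{n}_{\alpha}\to d_{\alpha}cc^{-1}d_{\alpha}^{-1}=e$ as $n\to\infty$, so $E:=\{w^{n}_{\alpha}:\alpha<\omega_{1},\ n\in\N\}$ accumulates at $e$ and presents, at $e$, an uncountable family of pairwise ``separated'' convergent sequences indexed by the discrete points $d_{\alpha}$.

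It remains to show that this configuration has no countable $cs$-network at $e$, and this is the step I expect to be the main obstacle. Assuming $\{P_{k}:k\in\N\}$ were such a network, each spine $(w^{n}_{\alpha})_{n}$ together with a neighbourhood $U$ of $e$ forces some $P_{k}$ with $\{e\}\cup\{w^{n}_{\alpha}:n\geq m\}\subseteq P_{k}\subseteq U$, and a pigeonhole over the uncountably many indices $\alpha$ makes one $P_{k}$ absorb tails of uncountably many spines while staying inside $U$. The difficulty is to diagonalise against the neighbourhood base of $e$ in $F(Y)$: because $D$ is closed and discrete, the free group topology (which is strictly finer than the Graev metric topology, under which these spines would converge uniformly) lets the required closeness of $c_{n}$ to $c$ in the block conjugated by $d_{\alpha}$ depend on $\alpha$, and one must choose $U$ so that no single $P_{k}\subseteq U$ can meet tails of more than countably many spines, contradicting the pigeonhole. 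Equivalently, this amounts to showing that the $cs$-character of $F_{4}(Y)$ at $e$ is uncountable, paralleling the uncountable-tightness computation for $A_{4}(C(\omega_{1}))$ invoked in Theorem~\ref{abeltightness}. Once the fan is shown not to be $csf$-countable, hereditarity of $csf$-countability transfers this to $F_{4}(Y)\subseteq F_{4}(X)$, so $F_{4}(X)$ is not $csf$-countable, completing the contrapositive and the proof.
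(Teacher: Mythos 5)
You should first note that the paper does not prove this statement at all: Theorem~\ref{t00} is imported verbatim from \cite{LL} (``a theorem which has just been proved in \cite{LL}''), so there is no internal proof to compare against. Your sufficiency argument and your reduction of the necessity to the claim that $F_{4}(C\bigoplus D)$ is not $csf$-countable (for $C$ a non-trivial convergent sequence and $D$ uncountable closed discrete, sitting closed in the stratifiable space $X$ so that $F(Y,X)\cong F(Y)$ by \cite{S2000}) are both sound and follow the expected route.

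The genuine gap is exactly where you say you expect it: you never prove that the fan $E=\{d_{\alpha}c_{n}c^{-1}d_{\alpha}^{-1}:\alpha<\omega_{1},\,n\in\N\}$ admits no countable $cs$-network at $e$. Your pigeonhole sketch cannot be closed without first showing that for \emph{every} function $f:\omega_{1}\to\omega$ there is a neighbourhood $U_{f}$ of $e$ in $F(Y)$ with $U_{f}\cap E\subseteq\{d_{\alpha}c_{n}c^{-1}d_{\alpha}^{-1}:n\geq f(\alpha)\}$, i.e.\ that the trace of the neighbourhood filter of $e$ on $E$ is the $S_{\omega_{1}}$-fan topology; only then does a single $P_{k}$ absorbing tails of uncountably many spines contradict $P_{k}\subseteq U_{f}$ for a suitably diagonalised $f$. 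This is precisely the hard point: for any continuous pseudometric $\rho$ on $Y$, invariance of the Graev extension gives $\hat\rho(d_{\alpha}c_{n}c^{-1}d_{\alpha}^{-1},e)=\hat\rho(d_{\alpha}c_{n},d_{\alpha}c)\leq\rho(c_{n},c)$, so every pseudometric ball around $e$ contains a tail of \emph{every} spine uniformly in $\alpha$, and no such $U_{f}$ can be produced from pseudometric neighbourhoods. One must instead invoke the finer description of neighbourhoods of $e$ in the non-Abelian group $F(Y)$ (the machinery of \cite{Y1998,Y2005}, where conjugation by the discrete points $d_{\alpha}$ is allowed to tighten the required closeness of $c_{n}$ to $c$ in an $\alpha$-dependent way). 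That verification is the entire content of the cited result from \cite{LL}; as written, your proposal reduces the theorem to the very fact it is supposed to establish.
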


 \begin{theorem}\label{thm:_csf}
  Let $X$ be a non-discrete La\v{s}nev space. Then $F(X)$ has the strong Pytkeev property
if and only if $F(X)$ is a $\mathfrak{P}_{0}$-space.
  \end{theorem}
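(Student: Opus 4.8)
The plan is to prove Theorem~\ref{thm:_csf} by establishing both implications, where one direction is immediate from the definitions and the other requires combining the characterization of Theorem~\ref{t00} with the known relationship between the strong Pytkeev property and $\aleph_0$-spaces. Recall from the implication diagram in Section~2 that every $\mathfrak{P}_0$-space has the strong Pytkeev property (indeed, $\mathfrak{P}_0$ means regular with a countable Pytkeev \emph{network}, and the strong Pytkeev property only requires a countable Pytkeev network at each point). Thus the direction ``$F(X)$ is a $\mathfrak{P}_0$-space $\Rightarrow$ $F(X)$ has the strong Pytkeev property'' is the easy one and follows directly, since $F(X)$ is always a Tychonoff (hence regular) topological group.

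For the forward direction, the strategy is to show that if $F(X)$ has the strong Pytkeev property, then $F(X)$ is an $\aleph_0$-space; once this is known, $F(X)$ is regular with a countable $k$-network, and from the diagram an $\aleph_0$-space is a $\mathfrak{P}_0$-space, completing the proof. To obtain that $F(X)$ is an $\aleph_0$-space, I would invoke Theorem~\ref{t00}: it suffices to verify that $F_4(X)$ is $csf$-countable. The key observation is that the strong Pytkeev property passes to $F_4(X)$ as a subspace, and that the strong Pytkeev property implies $csf$-countability. First I would note that the strong Pytkeev property is inherited by arbitrary subspaces (a countable Pytkeev network at a point of the subspace is obtained by intersecting with the subspace), so $F_4(X)$ inherits the strong Pytkeev property from $F(X)$. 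Then, as recorded in the text immediately preceding the theorem, a space with the strong Pytkeev property is of countable tightness and is $csf$-countable; in particular $F_4(X)$ is $csf$-countable.

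Having verified the hypothesis of Theorem~\ref{t00}, I conclude that $F(X)$ is an $\aleph_0$-space, and therefore a $\mathfrak{P}_0$-space by the implications in the diagram. This closes the forward implication and hence the equivalence. The main obstacle I anticipate is justifying cleanly that the strong Pytkeev property descends from $F(X)$ to the non-closed-in-general subspace $F_4(X)$ and that it yields $csf$-countability precisely at the points needed to apply Theorem~\ref{t00}; the bookkeeping of which points carry countable Pytkeev networks, and confirming these give countable $cs$-networks on $F_4(X)$, is where care is required, though each step is supported either by the definitions in Section~2 or by the cited facts. The remaining links between $\aleph_0$-space, $\mathfrak{P}_0$-space, and the strong Pytkeev property are read off directly from the implication diagram and require no further argument.
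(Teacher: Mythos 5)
Your reduction to Theorem~\ref{t00} is sound and is essentially what the paper does: the strong Pytkeev property of $F(X)$ gives $csf$-countability (equivalently, countable $cs^{\ast}$-character), this is inherited by the subspace $F_{4}(X)$, and Theorem~\ref{t00} then yields that $F(X)$ is an $\aleph_{0}$-space. The easy direction is also fine, since a countable Pytkeev network is in particular a countable Pytkeev network at each point.

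However, the last step of your forward implication contains a genuine gap. You assert that ``from the diagram an $\aleph_{0}$-space is a $\mathfrak{P}_{0}$-space,'' but the arrow in the diagram points the other way: every $\mathfrak{P}_{0}$-space is an $\aleph_{0}$-space, not conversely. The paper even remarks, immediately after this theorem, that it is unknown whether an $\aleph_{0}$-space \emph{with the strong Pytkeev property} must be a $\mathfrak{P}_{0}$-space; the unconditional implication you invoke is certainly unavailable (a $\mathfrak{P}_{0}$-space has the strong Pytkeev property, while an $\aleph_{0}$-space need not). Note also that your concluding step no longer uses the strong Pytkeev hypothesis at all, which is a warning sign, since being an $\aleph_{0}$-space alone cannot force the $\mathfrak{P}_{0}$-property. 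The paper closes this gap differently: from the $\aleph_{0}$-property it deduces that $F(X)$ is separable (it has a countable network), and then applies \cite[Theorem 1.7]{GKL3}, which upgrades separability together with the strong Pytkeev property to a countable Pytkeev network, i.e., to the $\mathfrak{P}_{0}$-property. Your argument is missing exactly this last ingredient.
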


  \begin{proof}
Obviously, it suffices to show the necessity. Suppose that $F(X)$ has the strong Pytkeev property. By Theorem~\ref{t00}, the group $F(X)$ is an $\aleph_{0}$-space, hence $F(X)$ is separable. It follows from \cite[Theorem 1.7]{GKL3} that $F(X)$ is a $\mathfrak{P}_{0}$-space.
  \end{proof}

We do not know whether an $\aleph_{0}$-space with the the strong Pytkeev property is a $\mathfrak{P}_{0}$-space. If this answer is positive, then we can replace ``$F(X)$ has the strong Pytkeev property'' by ``$F_{4}(X)$ has the strong Pytkeev property'' in Theorem~\ref{thm:_csf}.

In \cite{B}, T. Banakh posed the following problem.

\begin{problem}\cite{B}
Let $X$ be a (sequential) $\mathfrak{P}_{0}$-space. Is the free topological group over $X$ a $\mathfrak{P}_{0}$-space?
\end{problem}

But we even do not know an answer to the following question.

\begin{question}
Let $X$ be the rational number subspace $\mathbb{Q}$ with the usual topology. Is the free topological group over $X$ a $\mathfrak{P}_{0}$-space?
\end{question}

 However, we have the following result.

\begin{theorem}
Let $X$ be a $\mathfrak{P}_{0}$-space. Then $F(X)$ is the union of countably many $\mathfrak{P}_{0}$-subspaces.
\end{theorem}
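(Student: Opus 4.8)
The plan is to realize $F(X)$ as the countable union of its ``length strata'' and to prove that each stratum is a $\mathfrak{P}_0$-space. Put $F_0(X)=\{e\}$ and, for $n\geq 1$, let $S_n:=F_n(X)\setminus F_{n-1}(X)$ be the set of reduced words of length exactly $n$, so that $F(X)=\{e\}\cup\bigcup_{n\geq 1}S_n$ is a decomposition into countably many subspaces. It therefore suffices to show that each $S_n$, with the subspace topology inherited from $F(X)$, is a $\mathfrak{P}_0$-space (the singleton $\{e\}$ being trivially one). I would first record the stability properties of the class of $\mathfrak{P}_0$-spaces that are available from \cite{GKL3}: it is closed under passing to subspaces, under finite products, and under finite topological sums. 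Since $X^{-1}$ is homeomorphic to $X$, the sum $X\oplus X^{-1}$ is then a $\mathfrak{P}_0$-space, and hence so is the finite power $(X\oplus X^{-1})^n$.

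Next I would exploit the combinatorial description of $S_n$. Every $g\in S_n$ has a unique reduced spelling $(y_1,\dots,y_n)\in(X\oplus X^{-1})^n$ satisfying $y_{i+1}\neq y_i^{-1}$ for all $i$; let $R_n\subseteq(X\oplus X^{-1})^n$ be the subspace of all such reduced $n$-tuples, which is in fact open since each condition $y_{i+1}\neq y_i^{-1}$ is open for Hausdorff $X$. Because a reduced tuple is carried to the word it spells without any cancellation, the natural mapping restricts to a continuous bijection $i_n\upharpoonright R_n\colon R_n\to S_n$. The key step is to verify that $i_n\upharpoonright R_n$ is a homeomorphism, i.e.\ that the topology $S_n$ carries as a subspace of $F(X)$ coincides with the product topology transported from $R_n$, equivalently that the spelling map $g\mapsto(y_1,\dots,y_n)$ is continuous on $S_n$. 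Granting this, $S_n$ is homeomorphic to the subspace $R_n$ of the $\mathfrak{P}_0$-space $(X\oplus X^{-1})^n$ and is therefore a $\mathfrak{P}_0$-space, whence $F(X)=\{e\}\cup\bigcup_{n\geq 1}S_n$ exhibits $F(X)$ as a union of countably many $\mathfrak{P}_0$-subspaces.

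The main obstacle is exactly the continuity of the spelling map on $S_n$. One cannot circumvent it by merely observing that $S_n$ is a continuous bijective image of the $\mathfrak{P}_0$-space $R_n$: the class of $\mathfrak{P}_0$-spaces is not stable under continuous (even continuous bijective) images, as it sits strictly between the $\aleph_0$-spaces and the cosmic spaces, and a cosmic space---a regular continuous image of a separable metrizable, hence $\mathfrak{P}_0$, space---need not admit a countable Pytkeev network. What rescues the argument is that on the stratum of words of \emph{fixed} reduced length no cancellation can arise in a limit, so that the positions and the letters of a reduced word of length $n$ do vary continuously with the word; this is a manifestation of the Joiner-type structure theory for free topological groups, which I would extract from the description of the topology of $F(X)$ on the sets $F_n(X)$ used in \cite{AT2008}, the same machinery already invoked in the proof of Theorem~\ref{tcountable fan}. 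Once the homeomorphism $R_n\cong S_n$ is secured, the subspace/product stability of $\mathfrak{P}_0$-spaces completes the proof.
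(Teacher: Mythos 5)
Your proposal is correct and follows essentially the same route as the paper: decompose $F(X)$ into the length strata $F_n(X)\setminus F_{n-1}(X)$, identify each stratum via the homeomorphism given by the restriction of $i_n$ to the reduced tuples (the Joiner-type fact the paper also invokes), and conclude using the stability of $\mathfrak{P}_0$-spaces under finite products and subspaces from \cite{B}. Your explicit remark that continuity of the bijection alone would not suffice, and that the fixed-length stratification is what makes the spelling map continuous, is a correct and worthwhile elaboration of a step the paper states without comment.
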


\begin{proof}
For each $n\in \mathbb{N}$, it follows from \cite[Corollary 3.12]{B} that $(X\bigoplus X^{-1}\bigoplus\{e\})^{n}$ is a $\mathfrak{P}_{0}$-space. For each $n\in \mathbb{N}$, since the mapping $$i_{n}\upharpoonright_{i_{k}^{-1}(F_{n}(X)\setminus F_{n-1}(X))}: i_{n}^{-1}(F_{n}(X)\setminus F_{n-1}(X))\rightarrow F_{n}(X)\setminus F_{n-1}(X)$$ is a homeomorphism, it follows from \cite[Corollary 3.12]{B} that $i_{n}^{-1}(F_{n}(X)\setminus F_{n-1}(X))$ is a $\mathfrak{P}_{0}$-subspace in $(X\bigoplus X^{-1}\bigoplus\{e\})^{n}$, hence $F_{n}(X)\setminus F_{n-1}(X)$ is a $\mathfrak{P}_{0}$-subspace in $F(X)$. Since $F(X)=\bigcup_{n\in \mathbb{N}}(F_{n}(X)\setminus F_{n-1}(X))$, $F(X)$ is the union of disjoint countably many $\mathfrak{P}_{0}$-subspaces.
\end{proof}

For closing this section, we discuss the strong Pytkeev property in topological spaces.
It is well known that in the class of regular countably compact spaces the property of countable tightness is equivalent to the countable fan-tightness \cite[Corollary 2]{AB1996}. Moreover, each compact sequential non-first-countable space is of countable tightness, hence it does not have the strong Pytkeev property. Hence in the class of regular countably compact spaces the property of countable tightness is not equivalent to the strong Pytkeev property.
Moreover, it is well known that there exists a countably compact non-metrizable space with a point-countable $k$-network. It is natural to ask whether in the class of regular countably compact spaces the existence of a point-countable
$k$-network implies the existence of a point-countable Pytkeev network.  The answer is also negative, see Example~\ref{e0}.

\begin{example}\label{e0}
There exists an infinite countably compact space $X$ which satisfies the following conditions:
\begin{enumerate}
\item The space $X$ contains no infinite compact subset;

\item The space $X$ has a point-countable $k$-network;

\item The space $X$ does not have the strong Pytkeev property.
\end{enumerate}
\end{example}

\begin{proof}
By \cite{F1960}, there exists an infinite, countably
compact subspace $X$ of $\beta\mathbb{N}$, the Stone-Cech compactification of the
of the space of natural numbers endowed with discrete topology, such that $\mathbb{N}\subset X$ and every compact subset of the space $X$ is finite. Therefore, the space $X$ has a point-countable $k$-network. However, the space $X$ does not have the strong Pytkeev property, see \cite{B}.
\end{proof}

From the opposite side, recently, Z. Cai and S. Lin has proved that each sequentially compact
space with a point-countable $k$-network is metrizable \cite{C2015}. Remark, that
the proof of \cite[Proposition 1.4]{B1}, implies that the space $X$ has a point-countable $k$-network
provided it has a point-countable Pytkeev network. So the next theorem is a counterpart of this result.

\begin{theorem}
Let $X$ be a Hausdorff countably compact space with a point-countable Pytkeev network. Then the space $X$ is a metrizable compact space.
\end{theorem}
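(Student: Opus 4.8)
The plan is to deduce the result from the theorem of Cai and Lin \cite{C2015} that a sequentially compact space with a point-countable $k$-network is metrizable. Two ingredients must be assembled. First, as the excerpt already records (through the proof of \cite[Proposition 1.4]{B1}), a point-countable Pytkeev network is in particular a point-countable $k$-network, so $X$ carries one. Second, and this is the real content, I would show that the Pytkeev network upgrades countable compactness to sequential compactness; once $X$ is known to be sequentially compact, Cai--Lin yields metrizability, and a metrizable countably compact space is compact, which finishes the proof.

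To prove sequential compactness, let $(x_n)$ be a sequence in $X$. Discarding a constant subsequence if one occurs, I may assume the $x_n$ are pairwise distinct, so $A:=\{x_n:n\in\N\}$ is a countably infinite set. By countable compactness $A$ has an accumulation point $x$, and replacing $A$ by $A\setminus\{x\}$ I may assume $x\notin A$ while keeping $x\in\overline{A}$. The point-countability of the Pytkeev network $\mathscr P$ is now used crucially: since $A$ is countable, the subfamily $\{P\in\mathscr P: P\cap A\neq\emptyset\}=\bigcup_{a\in A}\{P\in\mathscr P: a\in P\}$ is countable, hence so is the family of those $P$ whose trace $P\cap A$ is infinite. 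Enumerate the latter as $\{P_i:i\in\N\}$ and put $M_i:=P_i\cap A$. Because $A$ accumulates at $x$, the Pytkeev condition guarantees that every open neighbourhood $U$ of $x$ contains some $P_i$ with $P_i\cap A$ infinite; thus every neighbourhood of $x$ contains one of the infinite sets $M_i$. In effect, point-countability has reduced the Pytkeev data at $x$ relative to $A$ to a \emph{countable} family $\{M_i\}$ that is cofinal, from inside, in the neighbourhood filter of $x$.

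The main obstacle is to convert this countable inside-cofinal family into an honest sequence from $A$ converging to $x$; this is precisely the step where countable compactness is indispensable, since for a general space a countable inside-cofinal family of infinite sets need not produce a convergent sequence. I would proceed by a diagonal construction: using regularity of $X$ together with the sets $M_i$, build a decreasing sequence of open neighbourhoods of $x$ and select points $a_k\in A$ lying ever deeper in them, arranging the selection so that the resulting set meets the relevant traces $M_i$. Countable compactness then forces the selected set to have an accumulation point, and the cofinality of $\{M_i\}$ is exactly what pins this accumulation point down to $x$ and promotes it to a genuine limit of a subsequence of $(x_n)$. This is also where the hypotheses must do strictly more than furnish a $k$-network: Example~\ref{e0} exhibits a countably compact space with a point-countable $k$-network that is not sequentially compact, so the full Pytkeev strength, and not merely the $k$-network it yields, is essential at this point.

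Once sequential compactness is in hand, I would invoke \cite{C2015}: being sequentially compact with a point-countable $k$-network, $X$ is metrizable. Finally, a metrizable space that is countably compact is sequentially compact and hence compact, so $X$ is a compact metrizable space, as required. I expect the bookkeeping in the diagonal construction, namely keeping the chosen points inside shrinking neighbourhoods while forcing their accumulation point to be $x$, to be the only genuinely delicate part; the remaining steps are either cited or routine.
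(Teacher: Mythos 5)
Your overall architecture (point-countable Pytkeev network $\Rightarrow$ point-countable $k$-network, plus sequential compactness, then Cai--Lin \cite{C2015}, then countably compact $+$ metrizable $\Rightarrow$ compact) would be sound \emph{if} the middle step were proved, but that step --- upgrading countable compactness to sequential compactness --- is exactly where the whole difficulty of the theorem lives, and your proposal does not actually carry it out. What you extract from point-countability is a countable family $\{M_i\}$ of infinite subsets of $A$ such that every neighbourhood of the cluster point $x$ contains some $M_i$. This is an ``inside-cofinal'' condition on the neighbourhood filter of $x$; it is strictly weaker than first countability or Fr\'echet--Urysohn-ness at $x$, and the proposed remedy fails for a concrete reason: since $x$ need not have a countable neighbourhood base, a decreasing sequence of open neighbourhoods of $x$ cannot be cofinal in its neighbourhood filter, so the accumulation point that countable compactness hands you for the diagonally selected points is only constrained to lie in some $G_\delta$ set around $x$ --- nothing forces it to equal $x$, let alone to be the limit of a genuine subsequence. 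One can iterate (pass to the new accumulation point and its own countable family), but the recursion has no visible termination. In short, the ``only genuinely delicate part'' you defer is the theorem; the rest of your argument consists of citations.

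The paper avoids sequential compactness altogether. It proves instead that the Pytkeev network $\mathcal N$ has the following covering property: for every countably compact $K\subseteq U$ with $U$ open there is a finite $\mathcal F\subseteq\mathcal N$ with $K\subseteq\bigcup\mathcal F\subseteq U$. The proof of this Claim is a diagonal selection that uses the (countably many) members of $\mathcal N$ at \emph{all} previously chosen points of $K$ simultaneously: if no finite subfamily works, one chooses $x_n\in K$ avoiding all $N_j(x_i)$ with $i,j<n$, lets $x^\star$ be a cluster point of $\{x_n\}$ supplied by countable compactness, and then the Pytkeev condition at $x^\star$ produces a single $N\subseteq U$ meeting $\{x_n\}$ infinitely often, which must be some $N_j(x_i)$ and hence contradicts the selection. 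Metrizability then follows from \cite[Theorem 4.1]{GMT1}. Note that this global use of the network across a whole countably compact set is what your purely local argument at the single point $x$ gives up; if you want to salvage your route, you would need to prove the sequential compactness claim with an argument of comparable strength, at which point the Gruenhage--Michael--Tanaka route is the shorter path.
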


\begin{proof}
Let $\mathcal N$ be a point-countable Pytkeev network on the space $X$. We first show the following claim.

{\bf Claim:} For each countably compact subset $K$ of $X$ and arbitrary open subset $U$ with $K\subset U$, there exists a finite subfamily $\mathcal{F}$ such that $K\subset\bigcup\mathcal{F}\subset U$.

Suppose not. For each $x\in K$, let $\{N\in\mathcal{N}: x\in N\}=\{N_{n}(x): n\in\omega\}$. Inductively choose $x_{n}\in K$ such that $x_{n}\not\in N_{j}(x_{i})$ for $i, j<n$. Put $A:=\{x_{n}: n\in\omega\}$. Since $K$ is countably compact and $A\subset K$, the set $A$ has a cluster point $x^{\star}$ in $K$. By the definition of Pytkeev network, it follows that there exists some $N\in\mathcal{N}$ such that $N$ contains infinitely many $x_{n}$'s. Therefore, we have $N=N_{j}(x_{i})$ for some $i$ and $j$, and there exists $n>i, j$ such that $x_{n}\in N_{j}(x_{i})$, contradicting the way the $x_{n}$'s were chosen.

By Claim and \cite[Theorem 4.1]{GMT1}, the space $X$ is metrizable (and thus compact). 
\end{proof}

\section{Free topological groups with a $\mathfrak G$-base}
In this section, we shall discuss the properties of free topological groups with a $\mathfrak G$-base, which are motivated by Questions~\ref{q0} and \ref{q1}. First, we recall a lemma. Then we shall give a characterization of free topological groups which are $k$-spaces having a $\mathfrak G$-base.
 Let $\textbf{TG}_{\mathfrak G}$ be the class of all topological groups having a $\mathfrak G$-base.

\begin{lemma}\label{l0}\cite{GKL}
Let $G\in\textbf{TG}_{\mathfrak G}$. Then the following are equivalent:
\begin{enumerate}
\item The group $G$ is a $k$-space;

\item The group $G$ is a sequential space;

\item The group $G$ is metrizable or contains a submetrizable open $k_{\omega}$-subgroup.
\end{enumerate}
\end{lemma}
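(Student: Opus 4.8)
My plan is to prove the equivalence $(1)\Leftrightarrow(2)$, the easy implication $(3)\Rightarrow(2)$, and the substantive implication $(1)\Rightarrow(3)$. The engine of the whole argument is a single fact, which I would establish first:
\[
(\ast)\qquad \text{every compact subspace of a group with a }\mathfrak{G}\text{-base is metrizable.}
\]
This holds because a $\mathfrak{G}$-base is inherited by every subspace and places $G$, together with all its subspaces, into the Cascales--Orihuela class $\mathfrak G$, whose compact members are metrizable. Granting $(\ast)$, the equivalence $(1)\Leftrightarrow(2)$ is immediate: $(2)\Rightarrow(1)$ is the standard fact that sequential spaces are $k$-spaces, and for $(1)\Rightarrow(2)$ I take a sequentially closed $A\subseteq G$; for each compact $K\subseteq G$ the trace $A\cap K$ is sequentially closed in the metrizable (hence first-countable) space $K$, so it is closed in $K$, and since $G$ is a $k$-space, $A$ is closed. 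Thus $G$ is sequential.

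For $(3)\Rightarrow(2)$, a metrizable group is first-countable and so sequential. If instead $G$ has a submetrizable open $k_{\omega}$-subgroup $H=\bigcup_{n\in\N}K_{n}$, with topology determined by the compacta $K_{n}$, then submetrizability makes each $K_{n}$ metrizable (a continuous bijection from a compact space onto a Hausdorff space is a homeomorphism), so the same trace argument shows $H$ is sequential. Being open, $H$ is clopen, $G$ is the topological sum of its cosets $gH$, each homeomorphic to $H$, and a topological sum of sequential spaces is sequential; hence $G$ is sequential, which is $(2)$ and therefore also $(1)$.

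The heart of the matter is $(1)\Rightarrow(3)$. I would dichotomize on the character of the neutral element. If $e$ has a countable neighbourhood base, then $G$ is first-countable and the Birkhoff--Kakutani theorem gives that $G$ is metrizable, i.e. case $(3)$. So suppose $G$ is not first-countable; by $(1)\Leftrightarrow(2)$ it is sequential. The objective is to localize this non-metrizability inside a single clopen piece: to construct an increasing sequence $K_{1}\subseteq K_{2}\subseteq\cdots$ of symmetric compact sets with $e\in K_{1}$, each metrizable by $(\ast)$, such that $H:=\bigcup_{n\in\N}K_{n}$ is a subgroup, is open in $G$, and carries the $k_{\omega}$-topology determined by $\{K_{n}\}$. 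Submetrizability of $H$ is then read off from the $\mathfrak G$-base on $H$ against the metrics on the $K_{n}$, yielding a single continuous metric, and $(3)$ follows.

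I expect the construction of $H$ to be the main obstacle, and within it the \emph{openness} of $H$ to be the crux. The guiding idea is to turn the failure of first countability into a combinatorial resource: since no countable subfamily of $\{U_{\alpha}:\alpha\in\N^{\N}\}$ is a neighbourhood base at $e$, the product order on $\N^{\N}$ leaves enough room to enlarge the compacta $K_{n}$ stage by stage, adjoining new convergent sequences (which exist by sequentiality and are metrizable-controlled by $(\ast)$), in such a way that the generated subgroup $H$ eventually contains a neighbourhood of $e$ and is therefore open. Carrying this out while \emph{simultaneously} guaranteeing that the $K_{n}$ determine the topology of $H$, i.e. the $k_{\omega}$ property, is the delicate balancing act. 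Once it is achieved, submetrizability is routine, and all three conditions are equivalent, since $(3)\Rightarrow(2)\Rightarrow(1)$ and $(1)\Rightarrow(3)$ close the loop.
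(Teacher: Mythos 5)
First, a point of comparison: the paper does not prove this lemma at all --- it is quoted verbatim from [GKL] --- so there is no internal proof to measure you against, only the known argument from that reference. Judged on its own, your write-up handles the easy parts correctly: $(2)\Rightarrow(1)$ is trivial, $(1)\Rightarrow(2)$ follows from metrizability of compacta via the trace argument, and $(3)\Rightarrow(2)$ works by decomposing $G$ into clopen cosets of the sequential subgroup $H$. But the one implication that carries all the content, $(1)\Rightarrow(3)$ in the non-first-countable case, is not proved. You describe an intended construction of an increasing sequence of symmetric compacta generating an open $k_{\omega}$-subgroup and then state explicitly that carrying it out is ``the main obstacle'' and ``the delicate balancing act.'' That is a statement of the problem, not a solution: nothing in the proposal shows that the subgroup generated by your compacta can be arranged to be open, nor that it carries the $k_{\omega}$-topology determined by them. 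As written, the argument is incomplete precisely where the lemma has its substance.

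Two further remarks. The fact $(\ast)$ (compact subspaces of a group with a $\mathfrak G$-base are metrizable) is true, but your justification via the Cascales--Orihuela class $\mathfrak G$ does not apply as stated: that class lives in the category of locally convex spaces, and the analogue for topological groups (or for spaces with local $\mathfrak G$-bases) is itself a nontrivial theorem of [GKL] that needs either its own proof or an explicit citation rather than an analogy. Second, the standard route to $(1)\Rightarrow(3)$ is not a bare-hands construction: one first notes that a $\mathfrak G$-base yields countable $cs^{\ast}$-character (cf.\ Proposition~\ref{p00}), then invokes the Banakh--Zdomskyy structure theorem asserting that a sequential topological group of countable $cs^{\ast}$-character is either metrizable or contains an open $k_{\omega}$-subgroup, and finally obtains submetrizability of that subgroup from the metrizability of its compacta (being $k_{\omega}$ with metrizable compacta it is cosmic, hence condenses onto a second-countable space). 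If you want a self-contained proof, that is the decomposition to follow; otherwise the construction you sketch must actually be carried out, and it has not been.
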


By Lemma~\ref{l0}, we know that the $k$-property and sequentiality are equivalent in the class of all topological groups having a $\mathfrak G$-base. In \cite{GKL}, the authors also said that ``It would be interesting to know whether the $k$-property and sequentiality are equivalent for the class of all topological groups having countable $cs^{\ast}$-character''. Indeed, {\bf the answer is negative}, see the following example.

\begin{example}\label{e1}
There exists a topological group $G$ such that it is a $k$-space with a countable $cs^{\ast}$-character. However, $G$ is not sequential.
\end{example}

\begin{proof}
Let $X$ be the Stone-\v{C}ech compactification $\beta D$ of any infinite discrete space $D$. Let $G:=F(X)$ or $G:=A(X)$. Obviously, the group $G$ is a $k$-space by \cite[Theorem 7.4.1]{AT2008}. It follows from a result of \cite{LL} that $G$ is of countable $cs^{\ast}$-character. However, it is well known that $\beta D$ is not a sequential space. Since $\beta D$ is closed in $G$, the free topological group $G
$ is not sequential.
\end{proof}

However, the topological group $G$ in the proof of Example~\ref{e1} does not have the strong Pytkeev property by Example~\ref{e0}, hence it is natural to pose the following question.

\begin{question}
Let a topological group $G$ be a $k$-space. If $G$ has the strong Pytkeev property, is it sequential?
\end{question}

In \cite{GKL3}, the authors gave an affirmative answer to Question~\ref{q0}. The following theorem complements it.

\begin{theorem}\label{t submetrziable}
Let $X$ be a space. Then $F(X)$ is a $k$-space with a $\mathfrak G$-base
if and only if either $X$ is discrete or $X$ is a submetrizable $k_{\omega}$-space.
\end{theorem}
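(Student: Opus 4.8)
The plan is to prove the two implications separately, using Lemma~\ref{l0} together with the affirmative answer to Question~\ref{q0} from \cite{GKL3} as the main external inputs, and then to reduce the hard direction to a single cardinality statement about cosets of an open subgroup.

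For sufficiency I would split along the two cases of the conclusion. If $X$ is discrete, then the inclusion of $X$ into the abstract free group on $X$ equipped with the discrete topology is continuous and a subspace embedding, so the universal property of $F(X)$ yields a continuous isomorphism onto a discrete group; hence $F(X)$ is itself discrete, and a discrete group is trivially a $k$-space with a $\mathfrak G$-base. If instead $X$ is a submetrizable $k_{\omega}$-space, then $F(X)$ has a $\mathfrak G$-base by the solution of Question~\ref{q0} in \cite{GKL3}, while $F(X)$ is a $k_{\omega}$-space (in particular a $k$-space) by \cite[Theorem 7.4.1]{AT2008}.

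For necessity, assume $F(X)$ is a $k$-space with a $\mathfrak G$-base. Since $F(X)\in\textbf{TG}_{\mathfrak G}$, Lemma~\ref{l0} applies and gives that $F(X)$ is metrizable or contains a submetrizable open $k_{\omega}$-subgroup $H$. In the metrizable case $F(X)$ is first-countable, so $X$ is discrete by \cite{G1962} and we are finished; so suppose $F(X)$ is non-metrizable and fix such an $H$. Because $H$ is open, its cosets form a clopen partition of $F(X)$, and as $X$ is closed in $F(X)$ it splits as a free topological sum $X=\bigoplus_{j}(X\cap g_jH)$ over the cosets meeting $X$; each summand is closed in $g_jH\cong H$ and hence is a submetrizable $k_{\omega}$-space. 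Since a countable topological sum of submetrizable $k_{\omega}$-spaces is again submetrizable and $k_{\omega}$, the entire proof reduces to showing that only countably many cosets meet $X$, i.e.\ that $[F(X):H]$ is countable (if $X$ is discrete we are already done, so assume $X$ is non-discrete).

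The hard part is exactly this index bound, and I would establish it by a countable-tightness argument modeled on the proof of Theorem~\ref{ttightness}. A $k$-space with a $\mathfrak G$-base has the strong Pytkeev property, hence countable tightness, which passes to $F_{8}(X)$; moreover $F(X)$ is sequential by Lemma~\ref{l0}, so the closed subspace $X$ is sequential, and being non-discrete it carries a non-trivial convergent sequence $C$, which—as $\pi\colon F(X)\to F(X)/H$ is continuous into a discrete group—may be assumed to lie in a single coset. If $[F(X):H]$ were uncountable, then $\pi(X)$ would be uncountable (it generates $F(X)/H$), and I could choose witnesses $d_\alpha\in X$ ($\alpha<\omega_{1}$) in pairwise distinct cosets; by clopenness of cosets these form an uncountable closed discrete set $D\subseteq X$ with $C\cap D=\emptyset$. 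I would then build the set $E\subseteq F_{8}(X)$ from $C$ and $D$ exactly as in Theorem~\ref{ttightness} (following \cite[Proposition 2.2]{Y2005}), for which $e\in\overline E$. To see that every countable $B\subseteq E$ is closed in the $k$-space $F(X)$, it suffices to check that each compact $K\subseteq F_{8}(X)$ meets $B$ finitely; the device that replaces the paracompactness used in Theorem~\ref{ttightness} is that any unbounded function on the discrete set $\pi(X)$ pulls back through $\pi$ to a continuous unbounded function on $X$, so $\mathrm{supp}(K\cap B)$—which would contain infinitely many $d_\alpha$ from distinct cosets—cannot be bounded, contradicting \cite[Theorem 1.5]{AOP1989}. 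Thus $F_{8}(X)$ would fail countable tightness, contradicting the strong Pytkeev property; hence $[F(X):H]$ is countable and $X$ is a submetrizable $k_{\omega}$-space.
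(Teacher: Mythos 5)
Your proof is correct in substance, but the necessity direction follows a genuinely different route from the paper's. Both arguments begin identically: sufficiency is delegated to \cite{GKL3} (plus \cite[Theorem 7.4.1]{AT2008} and the trivial discrete case), and necessity starts from Lemma~\ref{l0} to obtain either metrizability (hence discreteness of $X$) or a submetrizable open $k_\omega$-subgroup $H$. From there the paper argues via generalized metric space machinery: the clopen coset decomposition gives $F(X)$ a $\sigma$-compact-finite $k$-network, local Lindel\"ofness gives paracompactness of $X$, and \cite[Theorem 4.14]{LLL} is invoked to conclude that $X$ has a countable $k$-network, hence is Lindel\"of, submetrizable and $k_\omega$. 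You instead reduce everything to showing that only countably many cosets of $H$ meet $X$, and prove this by recycling the Yamada-type set $E$ from Theorem~\ref{ttightness} against the countable tightness that $F(X)$ inherits from the strong Pytkeev property. Your replacement of the stratifiability/paracompactness step of Theorem~\ref{ttightness} by the observation that an unbounded function on the discrete coset space pulls back to a continuous function on $X$ witnessing that $\mathrm{supp}(K\cap B)$ is not functionally bounded is a clean device, and it makes the argument more self-contained (no appeal to \cite{S2000} or to \cite[Theorem 4.14]{LLL}). What the paper's route buys is brevity and no need to re-verify that $e\in\overline{E}$ in this more general setting; what yours buys is independence from the $k$-network theorem of \cite{LLL} and a more transparent picture of where the $k_\omega$-structure of $X$ comes from.

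One small logical point you should repair: you conflate ``only countably many cosets meet $X$'' with ``$[F(X):H]$ is countable'', justifying the equivalence by ``$\pi(X)$ generates $F(X)/H$''. For a non-normal open subgroup $H$ of an abstract group, a generating set can meet only countably many left cosets while the index is uncountable (think of $G=H*\langle s\rangle$ with $H$ uncountable), so the implication ``$[F(X):H]$ uncountable $\Rightarrow$ $\pi(X)$ uncountable'' is not valid as stated. Fortunately your proof never needs the index itself: the reduction only requires that $X$ meet countably many cosets, and your contradiction is derived precisely from the negation of that statement. So simply run the reductio against ``uncountably many cosets meet $X$'' and drop the index claim; everything else stands.
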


\begin{proof}
The sufficiency was proved in \cite{GKL3}. It suffices to show the necessity.

Let $F(X)$ be a $k$-space with a $\mathfrak G$-base. Then it follows from Lemma~\ref{l0} that $F(X)$ is metrizable or contains a submetrizable open $k_{\omega}$-subgroup. If $F(X)$ is metrizable, then it is well known that $X$ is discrete. Hence we may assume that $F(X)$ is non-metrizable, and then $F(X)$ contains a submetrizable open $k_{\omega}$-subgroup. Then $F(X)=\bigoplus_{\alpha\in\Gamma} G_{\alpha}$, where each $G_{\alpha}$ is a submetrizable open $k_{\omega}$-subset in $F(X)$. Since each $G_{\alpha}$ has a countable $k$-network, it follows that $F(X)$ has a $\sigma$-compact finite $k$-network. Moreover, it is obvious that $F(X)$ is locally Lindel\"{o}f. It is well known that a locally Lindel\"{o}f topological group is paracompact \cite[Problem 3.2.A]{AT2008}, then $X$ is paracompact since $X$ is closed in $F(X)$. Thus $X$ is a paracompact space with a $\sigma$-compact finite $k$-network. Since $F(X)$ is a non-metrizable $k$-space, it follows from \cite[Theorem 4.14]{LLL} that $X$ has a countable $k$-network, hence $X$ is Lindel\"{o}f and submetrizable. Therefore, it is easy to see that $X$ is a submetrizable $k_{\omega}$-space.
\end{proof}

\begin{remark}\label{r000}
However, there exists a space $X$ such that $A(X)$ is a $k$-space with a $\mathfrak G$-base and $X$ is not a Lindel\"{o}f-space. Indeed, let $X:=C\bigoplus D$, where $C$ is a non-trivial convergent sequence with its limit point and $D$ is an uncountable discrete space $D$. Then $A(X)\cong A(C)\times A(D)$, thus $A(X)$ is a $k$-space. Since both $A(X)$ and $A(D)$ have $\mathfrak G$-bases, it follows from \cite{GKL} that $A(S)\times A(D)$ has a $\mathfrak G$-base. However, it is obvious that $X$ is not Lindel\"{o}f, hence it is not a $k_{\omega}$-space.
\end{remark}

By Remark~\ref{r000}, we see that we can not replace ``$F(X)$'' by ``$A(X)$'' in Theorem 4.4. However, we have the following theorem when we add some additional assumption on the space $X$.

\begin{theorem}
Let $X$ be a separable space. Then $A(X)$ is a $k$-space with a $\mathfrak G$-base if and only if $X$ is either countable discrete or a submetrizable $k_{\omega}$-space.
\end{theorem}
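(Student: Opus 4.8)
The plan is to run through Lemma~\ref{l0} and then let separability control the coset structure of the open $k_\omega$-subgroup it produces. For the necessity, suppose $X$ is separable and $A(X)$ is a $k$-space with a $\mathfrak G$-base. By Lemma~\ref{l0}, either $A(X)$ is metrizable or it contains a submetrizable open $k_\omega$-subgroup $H$. If $A(X)$ is metrizable it is first-countable, and only discrete $X$ yield a first-countable free (Abelian) group, so $X$ is discrete; a separable discrete space is countable, giving the first alternative $X$ countable discrete.

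The substantive case is $A(X)$ non-metrizable with such an $H$. First I would transfer separability to the group: if $D\subset X$ is countable dense, the subgroup it generates is countable and its closure is a subgroup containing $\overline D=X$, hence all of $A(X)$, so $A(X)$ is separable and therefore satisfies the countable chain condition. Since $H$ is open its cosets form a pairwise disjoint family of clopen sets, so by the chain condition there are only countably many of them and $A(X)=\bigsqcup_{n}(g_n+H)$ is a countable topological sum of clopen homeomorphic copies of $H$. Each summand is submetrizable and $k_\omega$. A countable topological sum of $k_\omega$-spaces is $k_\omega$ (the countable union of the defining compact families determines the topology because the summands are clopen), and a countable topological sum of submetrizable spaces is submetrizable (assemble the continuous injections $f_n\colon g_n+H\to M_n$ into a single continuous injection into the metrizable sum $\bigsqcup_n M_n$, which is continuous since its restriction to each clopen summand is). Hence $A(X)$ is itself a submetrizable $k_\omega$-space. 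As $X$ is a closed subspace of $A(X)$ (see \cite{AT2008}), it is submetrizable (restrict the injection) and $k_\omega$ (a closed subspace of a $k_\omega$-space is $k_\omega$), which is the second alternative.

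For the sufficiency, in either case $X$ is discrete or a submetrizable $k_\omega$-space, so Theorem~\ref{t submetrziable} gives that $F(X)$ is a $k$-space with a $\mathfrak G$-base. The canonical abelianization $q\colon F(X)\to A(X)$ is an open continuous surjective homomorphism (see \cite{AT2008}), and a $\mathfrak G$-base is preserved under such maps: if $\{U_\alpha:\alpha\in\N^{\N}\}$ is a monotone neighbourhood base at $e$ in $F(X)$, then $\{q(U_\alpha):\alpha\in\N^{\N}\}$ is a monotone neighbourhood base at the identity of $A(X)$, since $q$ is open and $\alpha\le\beta$ implies $q(U_\beta)\subseteq q(U_\alpha)$. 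Thus $A(X)$ has a $\mathfrak G$-base. That $A(X)$ is a $k$-space is immediate in both cases: if $X$ is countable discrete then $A(X)$ is metrizable, while if $X$ is a submetrizable $k_\omega$-space then $A(X)$ is $k_\omega$ by \cite[Theorem 7.4.1]{AT2008}.

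The delicate step is the non-metrizable half of the necessity, and the whole role of the separability hypothesis lives there: it upgrades the merely ``locally $k_\omega$'' conclusion of Lemma~\ref{l0} to a genuinely $\sigma$-compact (indeed $k_\omega$) global structure by bounding the number of cosets of $H$. This is exactly what fails for the non-separable group $A(C\oplus D)$ of Remark~\ref{r000}, where the uncountably many clopen cosets prevent $A(X)$ from being $\sigma$-compact, so the argument must use separability in an essential way precisely at this point. I expect no difficulty beyond verifying that the sum of the $f_n$ is injective and continuous and that the abelianization is a genuine topological quotient, both of which are routine.
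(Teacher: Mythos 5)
Your proof is correct and follows essentially the same route as the paper's: apply Lemma~\ref{l0}, then use separability of $A(X)$ (via the countable chain condition) to conclude that the open submetrizable $k_\omega$-subgroup has only countably many cosets, so that $A(X)$, and hence its closed subspace $X$, is a submetrizable $k_\omega$-space. The only difference is cosmetic: you obtain the sufficiency for $A(X)$ from Theorem~\ref{t submetrziable} by pushing a monotone base through the open abelianization quotient $F(X)\to A(X)$, whereas the paper simply defers this direction to the cited results; both are fine.
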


\begin{proof}
We adapt the proof of Theorem~\ref{t submetrziable} for the group $A(X)$ instead of $F(X)$. It suffices to show that $X$ is a submetrizable $k_{\omega}$-space if $A(X)$ is a non-metrizable $k$-space with a $\mathfrak G$-base. Since $X$ is separable, $A(X)$ is separable. Similarly to the proof of Theorem~\ref{t submetrziable}, we see that the index set $\Gamma$ in Theorem~\ref{t submetrziable}
is countable. Therefore, $A(X)$ has a countable $k$-network, and then $X$ is a submetrizable $k_{\omega}$-space.
\end{proof}

Next we consider the topological properties of $X$ such that the free topological group over $X$ has a $\mathfrak{G}$-base.

By the proof of \cite[Theorem 3.12]{GKL}, we can easily obtain the following proposition.

\begin{proposition}\label{p00}
If a space $X$ has a $\mathfrak{G}$-base at point $x\in X$, then $X$ is of countable $cs^{\ast}$-character at $x$.
\end{proposition}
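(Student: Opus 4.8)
The plan is to produce, from a $\mathfrak{G}$-base at $x$, an explicit countable $cs^{\ast}$-network at $x$, which is exactly what having countable $cs^{\ast}$-character at $x$ means. So first I would fix a $\mathfrak{G}$-base $\{U_\alpha : \alpha \in \N^\N\}$ at the point $x$, that is, a neighbourhood base at $x$ consisting of open sets with $U_\beta \subseteq U_\alpha$ whenever $\alpha \leq \beta$. As the candidate network I would take the countable family
\[
\mathscr{P} := \{ P_{n,s} : n \in \N,\ s = (s_1,\dots,s_n) \in \N^n \}, \qquad P_{n,s} := \bigcap \{ U_\alpha : \alpha_i = s_i \text{ for all } i \leq n \}.
\]
Each $P_{n,s}$ contains $x$ (since $x$ lies in every $U_\alpha$), there are only countably many pairs $(n,s)$, and fixing more coordinates \emph{enlarges} the intersection. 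Note that $\mathscr{P}$ need not consist of neighbourhoods of $x$, which is precisely what lets it be countable while still separating convergent sequences (a family of base elements alone could not, since $(\N^\N,\leq)$ has uncountable cofinality).

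Next I would verify that $\mathscr{P}$ is a $cs^{\ast}$-network at $x$. Given a sequence $x_k \to x$ and an open $U \ni x$, choose $\alpha \in \N^\N$ with $U_\alpha \subseteq U$, and abbreviate $P_n := P_{n,\,\alpha\uhr n}$. Since $\alpha$ itself meets the condition $\alpha_i = \alpha_i$ for $i \le n$, it occurs in the intersection defining $P_n$, whence $x \in P_n \subseteq U_\alpha \subseteq U$ for every $n$; thus containment in $U$ is automatic and only the capturing of a subsequence remains. Suppose toward a contradiction that each $P_n$ meets $\{x_k\}$ in a finite set. Using that the $P_n$ increase with $n$, I would choose $k_1 < k_2 < \cdots$ with $x_{k_n} \notin P_n$, and for each $n$ a witness $\beta^{(n)} \in \N^\N$ with $\beta^{(n)}_i = \alpha_i$ for $i \le n$ and $x_{k_n} \notin U_{\beta^{(n)}}$.

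The crux is then a diagonal construction. I would set $\beta^{\ast}_i := \max\big(\{\alpha_i\} \cup \{\beta^{(n)}_i : n < i\}\big)$. The hard part, and the whole point of the $\N^\N$-indexing, is that for $n \ge i$ one has $\beta^{(n)}_i = \alpha_i$ (coordinate $i$ lies in the matched block), so the supremum over all $n$ of $\beta^{(n)}_i$ is a maximum of finitely many integers; hence $\beta^{\ast}$ genuinely is an element of $\N^\N$ and satisfies $\beta^{\ast} \ge \beta^{(n)}$ for every $n$. By monotonicity $U_{\beta^{\ast}} \subseteq U_{\beta^{(n)}}$, so $x_{k_n} \notin U_{\beta^{\ast}}$ for all $n$, contradicting $x_{k_n} \to x$ since $U_{\beta^{\ast}}$ is a neighbourhood of $x$. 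Therefore some $P_n$ contains infinitely many $x_k$, and together with $P_n \subseteq U$ this yields the required member of $\mathscr{P}$ and subsequence. This coordinatewise-stabilization step is the one to adapt carefully from the proof of \cite[Theorem 3.12]{GKL}; everything else is bookkeeping.
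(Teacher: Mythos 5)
Your proof is correct and takes essentially the same route as the paper, whose ``proof'' is just a citation of the proof of \cite[Theorem 3.12]{GKL}: that argument builds exactly your countable family of intersections $\bigcap\{U_\beta : \beta_i=s_i \text{ for } i\le n\}$ and runs the same coordinatewise-stabilization diagonalization to extract the contradiction. Your write-up also correctly isolates why the argument needs no group structure and hence works at an arbitrary point of an arbitrary space, which is the only adaptation the paper's citation implicitly requires.
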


Therefore, we have the following proposition.

\begin{proposition}\label{l00}
Let $X$ be a space. If each $G_{n}(X)$ has a $\mathfrak{G}$-base at point $e$, then $G(X)$ is $csf$-countable.
\end{proposition}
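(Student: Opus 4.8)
The plan is to combine the homogeneity of the topological group $G(X)$ with the fact that convergent sequences in a free topological group cannot escape to arbitrarily high levels. Since $G(X)$ is a topological group it is homogeneous, and a countable $cs$-network at the neutral element $e$ is carried by left translations to a countable $cs$-network at every point; hence it suffices to build a countable $cs$-network at $e$. The one external ingredient I would use is the classical fact that every compact subset of $G(X)$ is contained in $G_{n}(X)$ for some $n\in\N$ (see, e.g., \cite{AT2008}): if $\{g_{k}\}$ converges to $e$ in $G(X)$, then $\{e\}\cup\{g_{k}:k\in\N\}$ is compact and therefore lies in some $G_{n}(X)$, and, as $G_{n}(X)$ carries the subspace topology, $\{g_{k}\}$ converges to $e$ in $G_{n}(X)$ as well.

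First I would assemble a countable $cs^{\ast}$-network at $e$ for the whole group. By hypothesis each $G_{n}(X)$ has a $\mathfrak G$-base at $e$, so Proposition~\ref{p00} gives a countable $cs^{\ast}$-network $\mathscr P_{n}$ at $e$ in $G_{n}(X)$. Set $\mathscr P:=\bigcup_{n\in\N}\mathscr P_{n}$, a countable family of subsets of $G(X)$. This $\mathscr P$ is a $cs^{\ast}$-network at $e$ in $G(X)$: given $\{g_{k}\}\to e$ in $G(X)$ and an open $U\ni e$, pick $n$ with $\{e\}\cup\{g_{k}:k\in\N\}\subseteq G_{n}(X)$ as above; then $U\cap G_{n}(X)$ is an open neighbourhood of $e$ in $G_{n}(X)$, and $\mathscr P_{n}$ supplies some $P\in\mathscr P_{n}\subseteq\mathscr P$ together with a subsequence satisfying $\{e\}\cup\{g_{k_{i}}:i\in\N\}\subseteq P\subseteq U\cap G_{n}(X)\subseteq U$.

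Then I would upgrade this $cs^{\ast}$-network to a $cs$-network by closing under finite unions. Let $\mathscr P^{\ast}$ be the (still countable) family of all finite unions of members of $\mathscr P$. To check it is a $cs$-network at $e$, fix $\{g_{k}\}\to e$ and an open $U\ni e$ and enumerate $\{P\in\mathscr P:e\in P\subseteq U\}=\{P^{(1)},P^{(2)},\dots\}$. If no finite union $P^{(1)}\cup\dots\cup P^{(j)}$ captured a tail of $\{g_{k}\}$, I could choose $k_{1}<k_{2}<\cdots$ with $g_{k_{i}}\notin P^{(1)}\cup\dots\cup P^{(i)}$; the subsequence $\{g_{k_{i}}\}$ still converges to $e$, so the $cs^{\ast}$-property of $\mathscr P$ produces some $P^{(j_{0})}$ containing $g_{k_{i}}$ for infinitely many $i$, which is impossible once $i\ge j_{0}$. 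Hence some finite union belongs to $\mathscr P^{\ast}$, contains a tail of $\{g_{k}\}$ and is contained in $U$; thus $\mathscr P^{\ast}$ is a countable $cs$-network at $e$, and by homogeneity $G(X)$ is $csf$-countable.

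I expect the genuinely non-formal point to be the confinement of each convergent sequence to a single level $G_{n}(X)$, which rests on the boundedness/compactness theorem for free topological groups; the other two moves are routine, the passage from a $cs^{\ast}$-network to a $cs$-network being the standard ``finite unions'' diagonal argument and the reduction to the point $e$ being immediate from homogeneity.
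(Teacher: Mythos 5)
Your proof is correct and follows essentially the same route as the paper: the paper's own argument consists of the single observation that every compact subset of $G(X)$ lies in some $G_{n}(X)$ (\cite[Corollary 7.4.4]{AT2008}), leaving implicit exactly the steps you spell out (Proposition~\ref{p00} applied to each $G_{n}(X)$, the union over $n$, the standard finite-unions upgrade from a countable $cs^{\ast}$-network to a countable $cs$-network, and homogeneity). Your write-up is a faithful, fully detailed version of the intended proof.
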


\begin{proof}
It suffices to note that for each compact subset $K$ in $G(X)$ there exists an $n\in\mathbb{N}$ such that $K\subset G_{n}(X),$ see
\cite[Corollary 7.4.4]{AT2008}.
\end{proof}

 An answer to the following question is still unknown for us.

\begin{question}
Let $X$ be a space. If each $G_{n}(X)$ has a $\mathfrak{G}$-base at $e$, does $G(X)$ have a $\mathfrak{G}$-base?
\end{question}

\begin{theorem}\label{t0}
Let $X$ be a collectionwise normal space containing a non-trivial convergent sequence. If $F(X)$ has a $\mathfrak{G}$-base, then $X$ is $\aleph_{1}$-compact.
\end{theorem}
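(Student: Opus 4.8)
The plan is to argue by contradiction, first reducing to a concrete subgroup and then exploiting the non-commutativity of $F$ against the $\omega^{\omega}$-indexing of the base. Assume $X$ is not $\aleph_{1}$-compact, so it carries an uncountable closed discrete set $D=\{d_{\alpha}:\alpha<\omega_{1}\}$; fix also the nontrivial convergent sequence $C=\{x_{n}:n\in\N\}$ with limit $x_{0}$, and (discarding finitely many indices, since $C\cup\{x_{0}\}$ is compact and $D$ is closed discrete) arrange $(C\cup\{x_{0}\})\cap D=\varnothing$. Put $Y:=D\cup C\cup\{x_{0}\}$, a closed subspace of $X$ homeomorphic to the topological sum of an uncountable discrete space and a convergent sequence. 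Since $X$ is collectionwise normal, the closed set $Y$ is $P$-embedded in $X$, so by the theorem used already above (\cite{S2000}; cf.\ \cite{AT2008}) the subgroup $F(Y,X)$ is topologically isomorphic to $F(Y)$. As a $\mathfrak G$-base is inherited by subspaces, hence by subgroups (\cite{GKL}), $F(Y)$ carries a $\mathfrak G$-base at $e$. Thus it suffices to reach a contradiction inside $F(Y)$ for this very concrete $Y$.

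Non-commutativity must genuinely be used, for the abelian group $A(Y)\cong A(D)\times A(C\cup\{x_{0}\})$ \emph{does} admit a $\mathfrak G$-base (Remark~\ref{r000}): there the word $d+x_{n}-x_{0}-d$ collapses to $x_{n}-x_{0}$ and forgets the anchor $d$, whereas in $F(Y)$ it does not. Following the construction in Theorem~\ref{ttightness}, fix for each $\alpha$ a bijection $f_{\alpha}\uhr_{\alpha}\colon\alpha\to\omega$ and set, for distinct $\alpha,\beta$,
\[
E_{\alpha,\beta}:=\{\,d_{\beta}x_{m}x_{0}^{-1}d_{\beta}^{-1}\,d_{\alpha}x_{f_{\alpha}(\beta)}x_{0}^{-1}d_{\alpha}^{-1}:m\le f_{\alpha}(\beta)\,\}\subset F_{8}(Y),
\]
and $E:=\bigcup_{\alpha\ne\beta}E_{\alpha,\beta}$; as in \cite{Y2005} one checks $e\notin E$ while $e\in\overline{E}$. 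The point of using two conjugated blocks with distinct anchors $d_{\alpha}\ne d_{\beta}$ is that, in any basic neighborhood of $e$ assembled from a sequence of continuous pseudometrics, the middle junction $d_{\beta}^{-1}d_{\alpha}$ can be charged by $\rho(d_{\alpha},d_{\beta})$. A single conjugated block $d_{\alpha}(\cdots)d_{\alpha}^{-1}$ would be essentially free (identical conjugators cancel at no cost), so its entry time into a neighborhood would not depend on $\alpha$; this is exactly why one block is not enough.

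Now let $\{V_{\gamma}:\gamma\in\omega^{\omega}\}$ be a monotone $\mathfrak G$-base at $e$ in $F(Y)$. Collectionwise normality enters a second, decisive time: since $D$ is closed discrete with its points separated by a discrete family of open sets, $D$ is $C$-embedded, so for any prescribed assignment of values on $\{d_{\alpha}\}$ there is a continuous pseudometric on $Y$ realizing it, in particular making chosen distances $\rho(d_{\alpha},d_{\beta})$ as large as desired. Feeding such pseudometrics into the standard neighborhood description of $F(Y)$ (\cite{AT2008}) produces, for each prescribed ``entry profile'', an actual open neighborhood of $e$ whose trace on each $E_{\alpha,\beta}$ is controlled. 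Assigning to each $\gamma$ the profile that records how far along the sequence $m\mapsto d_{\beta}x_{m}x_{0}^{-1}d_{\beta}^{-1}d_{\alpha}x_{f_{\alpha}(\beta)}x_{0}^{-1}d_{\alpha}^{-1}$ one must go before entering $V_{\gamma}$, the requirement that $\{V_{\gamma}\}$ be a base forces these $\omega^{\omega}$-many profiles to be cofinal, under domination, among all realizable profiles. Using the bijections $f_{\alpha}$ to linearize the pair index into $\omega$, I would then run a diagonalization of countable length against the $\omega^{\omega}$-indexed base, building $\gamma^{*}$ coordinatewise so that $V_{\gamma^{*}}$ omits a tail of one of the prescribed sequences accumulating at $e$, contradicting either the convergence underlying $e\in\overline{E}$ or the closure statement itself.

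The genuine difficulty is the third step: converting the qualitative principle ``identical conjugators are free, distinct anchors cost $\rho(d_{\alpha},d_{\beta})$'' into a precise correspondence between continuous pseudometrics on $Y$ and neighborhoods of $e$ in $F(Y)$ that permits prescribing entry profiles, and then defeating all $\omega^{\omega}$ basic neighborhoods simultaneously. Carrying out this bookkeeping, and ensuring the diagonalization succeeds in ZFC rather than only under a cardinal hypothesis such as $\mathfrak b=\omega_{1}$, is where the real work lies; the bijections $f_{\alpha}$ are precisely the device, already exploited in Theorem~\ref{ttightness}, that should reduce the needed diagonal to countable length.
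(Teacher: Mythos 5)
Your opening reduction is exactly the paper's: pass to $Y=S\cup D$, use collectionwise normality to get $Y$ $P$-embedded, invoke \cite{S2000} to identify $F(Y,X)$ with $F(Y)$, and observe that the relevant local property is inherited by the subgroup. From that point on, however, your argument has a genuine gap. The paper finishes in two lines by combining two facts: a $\mathfrak G$-base forces countable $cs^{\ast}$-character at every point (Proposition~\ref{p00}), while by the cited result of \cite{LL} the group $F(Y)$ (indeed already $F_{4}(Y)$) for $Y$ a convergent sequence plus an uncountable discrete set is \emph{not} $csf$-countable, hence not of countable $cs^{\ast}$-character --- contradiction. You instead attempt to contradict the $\mathfrak G$-base directly, via the set $E$ from Theorem~\ref{ttightness}, entry profiles, and a diagonalization, and you explicitly leave the decisive step (``carrying out this bookkeeping \dots is where the real work lies'') undone. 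A proof that defers its only nontrivial step is not a proof; everything before that step is the easy part that the paper also does.

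Moreover, the deferred step is not mere bookkeeping: it faces a real obstruction that you yourself half-acknowledge. A $\mathfrak G$-base is indexed by all of $\N^{\N}$, and a cofinal subfamily under coordinatewise domination has size $\mathfrak d$, which is consistently larger than $\omega_{1}$; dually, a family of $\omega_{1}$ ``test sequences'' (your pairs $\alpha,\beta<\omega_{1}$, linearized by the $f_{\alpha}$) need not be unbounded against $\N^{\N}$ unless $\mathfrak b=\omega_{1}$. So a ``diagonalization of countable length'' (or even of length $\omega_{1}$) against the $\N^{\N}$-indexed base does not obviously terminate in ZFC, and you give no mechanism to replace it. There is also a directional confusion in the sketch: constructing $\gamma^{*}$ so that $V_{\gamma^{*}}$ \emph{omits} tails of your sequences would only exhibit a small neighborhood, which is no contradiction; what you would need is a neighborhood of $e$ that no $V_{\gamma}$ refines, or a violation of $e\in\overline{E}$, and neither is produced. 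The fix is simply to use the route the paper uses: after the reduction to $F(Y)$, quote Proposition~\ref{p00} together with the non-$csf$-countability of $F(Y)$ from \cite{LL}; the cardinal-invariant difficulties then never arise, because the passage from a $\mathfrak G$-base to a countable $cs^{\ast}$-network is a ZFC construction on the base itself rather than a diagonalization against it.
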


\begin{proof}
Suppose that $X$ is not $\aleph_{1}$-compact. Hence there exists an uncountable closed discrete subset $D$ in $X$. Moreover, by the assumption, there exists a non-trivial convergent sequence $S$ with its limit point in $X$. Without loss of generality, we may assume that $S\cap D=\emptyset.$ Let $Y=S\cup D$. Since $X$ is collectionwise normal, the subspace $Y$ is a retract of $X$, and then $Y$ is $P$-embedded in $X$ \cite[Exercises 7.7.a]{AT2008}. By \cite{S2000}, the subgroup $F(Y, X)$ of $F(X)$ generated  by $Y$ is naturally topologically isomorphic to $F(Y)$. However, $F(Y)$ is not of $csf$-countable by a result in \cite{LL}, thus $F(Y)$ is not $cs^{\ast}$-countable. Then $F(X)$ is not $cs^{\ast}$-countable. However, since $F(X)$ has a $\mathfrak{G}$-base, it follows from Proposition~\ref{p00} that $F(X)$ is $cs^{\ast}$-countable, which is a contradiction.
\end{proof}

\begin{corollary}
Let $X$ be a stratifiable $k$-space. If $F(X)$ has a $\mathfrak{G}$-base, then $X$ is either discrete or separable.
\end{corollary}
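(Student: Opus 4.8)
The plan is to reduce the corollary directly to Theorem~\ref{t0} together with the separability machinery already exploited in the proof of Theorem~\ref{ttightness}. First I would dispose of the trivial case: if $X$ is discrete there is nothing to prove, so assume from now on that $X$ is a non-discrete stratifiable $k$-space with $F(X)$ carrying a $\mathfrak{G}$-base. I would then observe that every stratifiable space is paracompact, hence collectionwise normal (as noted via \cite[Th.5.1.17]{E1989} in the proof of Theorem~\ref{abeltightness}), so $X$ satisfies the collectionwise normality hypothesis of Theorem~\ref{t0}.

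The next step is to produce a non-trivial convergent sequence in $X$, which is what Theorem~\ref{t0} additionally requires. Here I would reuse the opening argument of the proof of Theorem~\ref{ttightness}: since each stratifiable space has a $G_\delta$-diagonal, \v Sne\v\i der's Theorem forces every compact subspace of $X$ to be metrizable, whence the stratifiable $k$-space $X$ is sequential. A non-discrete sequential space has a non-isolated point $x$, so $\{x\}$ fails to be sequentially open, and therefore $X$ contains a non-trivial sequence converging to a point. With all hypotheses of Theorem~\ref{t0} now verified, I may conclude that $X$ is $\aleph_{1}$-compact, i.e.\ that $X$ has countable extent.

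It remains to upgrade $\aleph_{1}$-compactness to separability, and this again mirrors the end of the proof of Theorem~\ref{ttightness}. By \cite{K1985} every stratifiable space is a $\sigma$-space, and a $\sigma$-space of countable extent is cosmic (has a countable network), hence separable. Since $X$ has countable extent by the previous paragraph, $X$ is separable, completing the dichotomy.

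I expect the only genuinely substantive ingredient to be Theorem~\ref{t0} itself, which carries the $\mathfrak{G}$-base hypothesis into a combinatorial restriction on closed discrete subsets; everything else is standard stratifiable-space theory ($\sigma$-space, $G_\delta$-diagonal, paracompactness) that the paper has already invoked. The main point to state carefully, rather than a real obstacle, is the equivalence between $\aleph_{1}$-compactness and countable extent, so that the $\sigma$-space cosmicity argument applies verbatim.
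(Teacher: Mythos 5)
Your proposal is correct and follows essentially the same route as the paper: the paper's proof likewise notes that a stratifiable $k$-space is paracompact and sequential, applies Theorem~\ref{t0} to get $\aleph_{1}$-compactness, and then uses the $\sigma$-space/cosmic argument to conclude separability. You merely spell out in more detail the steps the paper compresses (collectionwise normality from paracompactness, and the existence of a non-trivial convergent sequence from sequentiality plus non-discreteness).
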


\begin{proof}
Assume that $X$ is not discrete. Since a stratifiable $k$-space is paracompact and sequential, $X$ is $\aleph_{1}$-compact by Theorem~\ref{t0}. By \cite{K1985}, each stratifiable space is a $\sigma$-space, and each $\aleph_{1}$-compact $\sigma$-space is cosmic, and, therefore, separable.
\end{proof}

Recently, A.G. Leiderman, V.G. Pestov and A.H. Tomita in \cite{LPT2015} showed the following two results:

\begin{theorem}\label{t1}\cite{LPT2015}
The free Abelian topological group $A(X)$ on a uniform
space $X$ has a $\mathfrak{G}$-base if and only if $X$ has a $\mathfrak{G}$-base.
\end{theorem}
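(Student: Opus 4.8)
The plan is to work with an explicit description of a neighbourhood base at the neutral element $0$ of $A(X)$ in terms of the uniformity of $X$, and then to exploit the order isomorphism between $(\N^\N)^\N$ and $\N^\N$ to convert a sequence-indexed base into a genuine $\mathfrak G$-base. Throughout, ``$X$ has a $\mathfrak G$-base'' means that the uniformity of $X$ admits a base $\{V_\alpha:\alpha\in\N^\N\}$ of entourages with $V_\beta\subseteq V_\alpha$ whenever $\alpha\leq\beta$, while ``$A(X)$ has a $\mathfrak G$-base'' refers, as usual for topological groups, to a monotone neighbourhood base $\{U_\alpha:\alpha\in\N^\N\}$ at $0$ (the whole group is then recovered by translation, since translations are homeomorphisms). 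The central combinatorial fact I would isolate first is that, identifying a sequence $(\alpha^{(n)})_{n\in\N}$ of elements of $\N^\N$ with the function $(n,k)\mapsto\alpha^{(n)}_k$ on $\N\times\N$, any bijection $\N\times\N\to\N$ induces an order isomorphism $(\N^\N)^\N\cong\N^{\N\times\N}\cong\N^\N$ for the coordinatewise order.

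For the sufficiency, suppose $X$ has a $\mathfrak G$-base $\{V_\alpha:\alpha\in\N^\N\}$ of symmetric entourages. I would recall the Graev-type neighbourhood base at $0$ for the free Abelian topological group over a uniform space: for each sequence $\bar V=(V_n)_{n\in\N}$ of entourages of $X$, the set
\[
W(\bar V)=\Bigl\{\sum_{i=1}^{k}(a_i-b_i):k\in\N,\ (a_i,b_i)\in V_i\ \text{for}\ i\leq k\Bigr\}
\]
is a neighbourhood of $0$, and the family of all such $W(\bar V)$ is a base at $0$. Given a sequence $\bar\alpha=(\alpha^{(n)})_n\in(\N^\N)^\N$, put $W_{\bar\alpha}:=W\bigl((V_{\alpha^{(n)}})_n\bigr)$. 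Two verifications are needed: first, $\{W_{\bar\alpha}:\bar\alpha\in(\N^\N)^\N\}$ is still a base at $0$, which follows because $\{V_\alpha\}$ is cofinal among all entourages, so every $W(\bar U)$ contains some $W_{\bar\alpha}$; and second, the assignment is monotone decreasing, i.e.\ $\bar\alpha\leq\bar\beta$ forces $V_{\beta^{(i)}}\subseteq V_{\alpha^{(i)}}$ for every $i$ and hence $W_{\bar\beta}\subseteq W_{\bar\alpha}$. Composing with the order isomorphism $\N^\N\cong(\N^\N)^\N$ from the first paragraph then yields a monotone base at $0$ indexed by $\N^\N$, that is, a $\mathfrak G$-base for $A(X)$.

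For the necessity, the starting point is that the canonical map $X\to A(X)$ is a uniform embedding onto a closed subspace, so the uniformity of $X$ coincides with the one induced from the natural group uniformity of $A(X)$; I would cite the structure theory of free topological groups over uniform spaces for this. Now if $\{U_\alpha:\alpha\in\N^\N\}$ is a $\mathfrak G$-base at $0$ of $A(X)$, then the entourages $\widehat U_\alpha:=\{(g,h):g-h\in U_\alpha\}$ form a base of the group uniformity that is monotone in $\alpha$, so their traces $\widehat U_\alpha\cap(X\times X)$ form a monotone base of the induced, hence original, uniformity of $X$. This is exactly a $\mathfrak G$-base for $X$.

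The main obstacle I anticipate is pinning down the neighbourhood base $W(\bar V)$ in the uniform-space setting and checking that it behaves monotonically in $\bar V$: the Abelian case avoids the cancellation difficulties of the non-abelian group, but one still has to confirm that the formula above (possibly with more careful control on the number of summands or on the successive entourages) genuinely describes a base for the free uniformity, and that enlarging each entourage enlarges $W(\bar V)$. The reverse direction hinges on the uniform-embedding fact, which is standard but must be invoked precisely so that the phrase ``induced uniformity'' really delivers the given uniformity on $X$ rather than a coarser one.
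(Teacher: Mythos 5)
The paper does not prove this statement at all: Theorem~4.11 is quoted verbatim from the reference \cite{LPT2015} (Leiderman--Pestov--Tomita), so there is no internal argument to compare yours against. Your proposal is correct, and it is in substance the argument of that reference. The two external inputs you rely on are both standard and correctly invoked: (i) the description of a neighbourhood base at $0$ of the free Abelian topological group of a uniform space $(X,\mathscr U)$ by the sets $W(\bar V)=\bigl\{\sum_{i=1}^{k}(a_i-b_i): k\in\N,\ (a_i,b_i)\in V_i\bigr\}$ indexed by sequences $\bar V$ of entourages, and (ii) the fact that the canonical map $X\to A(X)$ is a uniform embedding, so the group uniformity of $A(X)$ induces the original uniformity on $X$. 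Granting these, your verifications are sound: $W(\bar V)$ is monotone in each coordinate $V_i$, so a monotone base $\{V_\alpha:\alpha\in\N^\N\}$ of entourages yields a monotone family $\{W_{\bar\alpha}:\bar\alpha\in(\N^\N)^\N\}$ which is cofinal among all $W(\bar U)$ and hence a base at $0$; the order isomorphism $(\N^\N)^\N\cong\N^{\N\times\N}\cong\N^\N$ for the coordinatewise order is exactly the right device to collapse the double index; and for the converse the traces on $X\times X$ of the monotone entourages $\widehat U_\alpha=\{(g,h):g-h\in U_\alpha\}$ give a monotone base of the induced, hence original, uniformity. The only point worth stating explicitly in a written-up version is that each $W_{\bar\alpha}$ is itself a neighbourhood of $0$ (it is, being of the form $W(\bar V)$), since a cofinal family of non-neighbourhoods would not be a base; with that remark the argument is complete.
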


\begin{corollary}\label{c0}\cite{LPT2015}
Let $X$ be a metrizable space and the set of all non-isolated points of $X$ is a $\sigma$-compact subset of $X$. Then $A(X)$ has a $\mathfrak{G}$-base.
\end{corollary}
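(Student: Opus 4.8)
The plan is to derive this as a direct consequence of Theorem~\ref{t1}, so the entire task reduces to equipping $X$ with a suitable uniform structure and verifying that $X$, as a uniform space, has a $\mathfrak{G}$-base. First I would fix the (unique) uniformity $\mathcal{U}$ compatible with the metric topology on the metrizable space $X$, namely the metric uniformity generated by a fixed admissible metric $d$. With this uniformity in place, Theorem~\ref{t1} tells us that $A(X)$ has a $\mathfrak{G}$-base if and only if the uniform space $(X,\mathcal{U})$ has a $\mathfrak{G}$-base, where the latter means a base $\{V_\alpha : \alpha\in\N^{\N}\}$ for the uniformity $\mathcal{U}$ that is monotone with respect to the natural order on $\N^{\N}$ (i.e.\ $V_\beta\subseteq V_\alpha$ whenever $\alpha\leq\beta$). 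So the real content is to construct such a monotone uniform base from the hypotheses: metrizability plus $\sigma$-compactness of the set $N$ of non-isolated points.

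Next I would set up the building blocks of the construction. Write $N=\bigcup_{k\in\N} K_k$ with each $K_k$ compact, and let $I=X\setminus N$ be the (open) set of isolated points. The key geometric idea is that near the compact set $N$ one needs only a single controllable "radius" at a time, while on the isolated points one has complete freedom because singletons are already entourage-sized. Concretely, for each $\alpha=(\alpha_k)_{k\in\N}\in\N^{\N}$ I would define an entourage
\[
V_\alpha=\Delta_X \cup \bigcup_{k\in\N}\Bigl(U\bigl(K_k,\tfrac{1}{\alpha_k}\bigr)\times U\bigl(K_k,\tfrac{1}{\alpha_k}\bigr)\Bigr)_{d},
\]
where $U(K_k,\varepsilon)$ denotes the open $d$-ball of radius $\varepsilon$ about $K_k$ and $\Delta_X$ is the diagonal; the diagonal term handles the isolated points automatically since $\{x\}\times\{x\}\subseteq\Delta_X$. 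One checks that each $V_\alpha$ is an entourage, that $\alpha\leq\beta$ forces $V_\beta\subseteq V_\alpha$ (smaller radii give smaller entourages), and that every basic metric entourage $\{(x,y):d(x,y)<\varepsilon\}$ is refined by some $V_\alpha$ — here one exploits compactness of each $K_k$ to absorb the neighborhood structure near $N$, while points far from every $K_k$ are isolated and already separated by $\Delta_X$. This yields a $\mathfrak{G}$-base for $(X,\mathcal{U})$, and then Theorem~\ref{t1} delivers the $\mathfrak{G}$-base for $A(X)$.

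The main obstacle I anticipate is the refinement (cofinality) step: showing that for an \emph{arbitrary} entourage one can choose $\alpha\in\N^{\N}$ with $V_\alpha$ contained in it. The subtlety is that a non-isolated point $x$ may lie in (the closure of) infinitely many of the $K_k$, so simultaneously controlling the radii $1/\alpha_k$ across all $k$ requires care to ensure the union $\bigcup_k U(K_k,1/\alpha_k)$ does not blow up the entourage near such overlaps. The resolution is to use that $X$ is metrizable and each $K_k$ is compact, so on a fixed bounded neighborhood of $K_k$ the uniformity is governed by a single modulus; by passing to the minimum of finitely many radii on compact sets and using that isolated points contribute only via $\Delta_X$, the required $\alpha$ can be selected coordinatewise. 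A cleaner alternative, which I would present if the direct estimate proves awkward, is to observe that a metrizable uniform space whose non-isolated part is $\sigma$-compact is uniformly homeomorphic to a subspace of a countable product of compact metric spaces with discrete "tails," and then invoke the known stability of the $\mathfrak{G}$-base property under such products and subspaces to conclude; either route reduces the corollary to Theorem~\ref{t1}.
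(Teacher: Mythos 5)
Your reduction to Theorem~\ref{t1} is the right idea in outline, but it is carried out with the wrong uniformity, and this is a genuine gap rather than a presentational one. In Theorem~\ref{t1} the phrase ``free Abelian topological group on a uniform space'' refers to the uniform free group determined by the given uniform structure, and this coincides topologically with the Markov group $A(X)$ of Corollary~\ref{c0} only when that structure is the universal (fine) uniformity of $X$ --- this is exactly why the paper introduces the notion of a \emph{universally} uniform $\mathfrak{G}$-base before Theorem~\ref{t2}. You instead take ``the (unique) uniformity compatible with the metric topology,'' which does not exist (compatible uniformities are unique only on compact spaces), and you work with the metric uniformity of one admissible metric $d$. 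That uniformity has a countable base, namely $\{(x,y):d(x,y)<1/n\}$ for $n\in\N$, and hence trivially a $\mathfrak{G}$-base; so if your reduction were valid it would prove the conclusion for \emph{every} metrizable space, with the $\sigma$-compactness hypothesis playing no role. This already signals that the reduction cannot be right, since that hypothesis in Corollary~\ref{c0} is not redundant.

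Concretely, the entourages $V_\alpha$ you build from $d$-balls around the compact sets $K_k$ all belong to the metric uniformity of $d$, whereas a base of the universal uniformity of a metrizable space consists of the sets $\bigcup_{U\in\gamma}U\times U$ for \emph{arbitrary} open covers $\gamma$ (equivalently, the unit entourages of arbitrary continuous pseudometrics). Already for $X=\R$ the continuous metric $d'(x,y)=|e^{x}-e^{y}|$ yields a universal entourage that contains no $V_\alpha$, so your family is not cofinal in the uniformity to which Theorem~\ref{t1} must be applied. The real content of Corollary~\ref{c0} is to produce an $\N^{\N}$-indexed monotone cofinal family inside the fine uniformity --- that is, a monotone cofinal assignment of open covers or continuous pseudometrics --- and it is precisely there that the $\sigma$-compactness of the set of non-isolated points is genuinely used (near each $K_k$ an arbitrary open cover admits a Lebesgue-type radius $1/\alpha_k$ by compactness, while on the isolated part one refines to singletons). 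Note also that the paper supplies no proof of this corollary: it is quoted from \cite{LPT2015}, so the comparison here is necessarily with what such a proof must accomplish rather than with a written argument; in any case your version does not yet accomplish it.
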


For a metrizable space $X$, it follows from \cite{AOP1989} that $A(X)$ is a $k$-space if and only if $X$ is a locally compact space and the set of all non-isolated points of $X$ is separable. From Corollary~\ref{c0}, it is easy to see that there exists a space $Y$ which is not a $k$-space such that $A(Y)$ has a $\mathfrak{G}$-base.

However, the situation changes much for (non-Abelian) free topological groups. Let $X=C\bigoplus D$, where $C$ is a non-trivial convergent sequence with the limit point and $D$ is a closed discrete space of
cardinality $\aleph_{1}$. From \cite{LL}, $F_{4}(X)$ is not $csf$-countable, hence $F_{4}(X)$ does not have a $\mathfrak{G}$-base. In particular, we see that $F(X)$ does not have a $\mathfrak{G}$-base. However, we have the following Theorem~\ref{t2}.

By a similar proof of \cite[Proposition 2.7]{GKL}, we can obtain the following proposition.

\begin{proposition}\label{p0}
Suppose that, for each $n\in\mathbb{N}$, $X_{n}$ is a space with a $\mathfrak{G}$-base. Then the countable product $\prod_{n\in\mathbb{N}}X_{n}$ has a $\mathfrak{G}$-base.
\end{proposition}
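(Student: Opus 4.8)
The plan is to work pointwise, fixing an arbitrary point $x=(x_{n})_{n\in\N}$ of the product $P:=\prod_{n\in\N}X_{n}$ and producing an explicit neighbourhood $\mathfrak{G}$-base at $x$; since, in the sense used in Proposition~\ref{p00}, a space has a $\mathfrak{G}$-base precisely when it has one at each of its points, this suffices. For each $n\in\N$ let $\{U^{n}_{\gamma}:\gamma\in\N^{\N}\}$ be a neighbourhood $\mathfrak{G}$-base at $x_{n}$ in $X_{n}$, so that $U^{n}_{\gamma'}\subseteq U^{n}_{\gamma}$ whenever $\gamma\leq\gamma'$. The essential device is to let a single index $\alpha\in\N^{\N}$ control two things at once: how many coordinates of the product are restricted, and how finely each such coordinate is restricted. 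To this end I would reserve the first entry of $\alpha$ to record the number $m:=\alpha(1)$ of restricted coordinates, and fix once and for all a bijection $\psi:\N\times\N\to\N\setminus\{1\}$ to redistribute the remaining entries; for each $n$ this extracts an element $\gamma^{\alpha}_{n}\in\N^{\N}$ defined by $\gamma^{\alpha}_{n}(j):=\alpha(\psi(n,j))$.

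With this encoding in hand, I would set, for each $\alpha\in\N^{\N}$,
\[
W_{\alpha}:=\prod_{n\leq\alpha(1)}U^{n}_{\gamma^{\alpha}_{n}}\times\prod_{n>\alpha(1)}X_{n},
\]
a basic open neighbourhood of $x$ in $P$. There are then exactly two claims to verify: that $\{W_{\alpha}:\alpha\in\N^{\N}\}$ is decreasing along the partial order of $\N^{\N}$, and that it is a neighbourhood base at $x$. For the first, suppose $\alpha\leq\beta$; then $\alpha(1)\leq\beta(1)$ and $\gamma^{\alpha}_{n}\leq\gamma^{\beta}_{n}$ for every $n$, so on each coordinate $n\leq\alpha(1)$ the factor $U^{n}_{\gamma^{\beta}_{n}}$ is contained in $U^{n}_{\gamma^{\alpha}_{n}}$, on each coordinate $\alpha(1)<n\leq\beta(1)$ the factor $U^{n}_{\gamma^{\beta}_{n}}$ sits inside $X_{n}$, and on the remaining coordinates both factors equal $X_{n}$; hence $W_{\beta}\subseteq W_{\alpha}$. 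For the second, given a basic neighbourhood $O=\prod_{n\leq N}V_{n}\times\prod_{n>N}X_{n}$ of $x$, I would choose for each $n\leq N$ an index $\gamma_{n}$ with $x_{n}\in U^{n}_{\gamma_{n}}\subseteq V_{n}$, then put $\alpha(1):=N$ and $\alpha(\psi(n,j)):=\gamma_{n}(j)$ for $n\leq N$ (and $\alpha$ arbitrary, say $1$, on the remaining entries). This gives $\gamma^{\alpha}_{n}=\gamma_{n}$ for $n\leq N$ and $\alpha(1)=N$, whence $W_{\alpha}\subseteq O$.

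The only real subtlety, and the step I expect to demand the most care, is the bookkeeping that makes monotonicity and the base property simultaneously compatible: increasing $\alpha$ must at the same time enlarge the set of restricted coordinates and shrink each restricted factor, and the reservation of the entry $\alpha(1)$ together with the bijection $\psi$ is precisely what aligns these two effects so that $W_{\beta}\subseteq W_{\alpha}$ holds for all $\alpha\leq\beta$. Once the encoding is fixed, both verifications reduce to the coordinatewise comparisons above, and since every point of $P$ is treated in the same manner this produces a $\mathfrak{G}$-base on the whole countable product.
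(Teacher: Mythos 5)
Your argument is correct, and it is essentially the proof the paper itself omits: the paper simply invokes ``a similar proof of [GKL, Proposition 2.7]'', and your encoding of $\alpha\in\N^{\N}$ (first entry controlling how many coordinates are restricted, a bijection $\N\times\N\to\N\setminus\{1\}$ distributing the remaining entries among the factors) is exactly the standard device used there, adapted from the identity of a product group to an arbitrary point of a product space. Your reading of ``$\mathfrak{G}$-base'' as ``local $\mathfrak{G}$-base at each point'' is also the one the paper actually uses when it applies this proposition in Theorem~\ref{t2}, so the proof is complete as written.
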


Given a uniformizable space $X$ there is a finest uniformity on $X$ compatible with the topology of $X$ called the {\it fine uniformity} or {\it universal uniformity}.  A Tychonoff space $X$ is said to have a {\it uniform $\mathfrak{G}$-base} if there exists a uniform structure $\mathscr{U}$ on $X$, which induces the topology of $X$, such that $\mathscr{U}$ has a $\mathfrak{G}$-base. In particular, if $\mathscr{U}$ is the universal uniformity on $X$ with a uniform $\mathfrak{G}$-base, then we say that $X$ has an {\it universally uniform $\mathfrak{G}$-base}.

\begin{theorem}\label{t2}
Let $X$ have an universally uniform $\mathfrak{G}$-base. Then $F_{2}(X)$ has a $\mathfrak{G}$-base at each point.
\end{theorem}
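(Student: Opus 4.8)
The plan is to cut $F_2(X)$ into finitely many clopen pieces on which the induced topology is understood, thereby reducing the whole statement to the construction of a $\mathfrak G$-base at the single point $e$, and then to describe the neighbourhoods of $e$ explicitly through the universal uniformity of $X$. Let $\mathscr U$ be the universal uniformity on $X$ and let $\{U_\alpha:\alpha\in\N^{\N}\}$ be a monotone base of $\mathscr U$ witnessing the universally uniform $\mathfrak G$-base (we may take each $U_\alpha$ symmetric). I would first consider the continuous homomorphism $h\colon F(X)\to\Z$ (with $\Z$ discrete) extending the constant map $x\mapsto 1$, which exists by the universal property of $F(X)$. Since $h$ assumes on $F_2(X)$ only the values in $\{-2,-1,0,1,2\}$, the sets $P_k:=h^{-1}(k)\cap F_2(X)$ form a partition of $F_2(X)$ into clopen pieces; inspecting the reduced words of length at most $2$ gives
\[
P_1=X,\quad P_{-1}=X^{-1},\quad P_2=\{xy:x,y\in X\},\quad P_{-2}=\{x^{-1}y^{-1}:x,y\in X\},
\]
\[
P_0=\{e\}\cup\{xy^{-1}:x\ne y\}\cup\{x^{-1}y:x\ne y\}.
\]
Because each $P_k$ is clopen, it suffices to exhibit a $\mathfrak G$-base at each point of each $P_k$.

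The pieces other than $\{e\}$ are routine. Note first that $X$ has a $\mathfrak G$-base at each point: for fixed $x$ the sections $\{U_\alpha[x]:\alpha\in\N^{\N}\}$ form a monotone neighbourhood base at $x$. Hence so does $X\times X$ at each of its points, either by the finite-factor case of Proposition~\ref{p0} or directly via $\{U_\alpha[x]\times U_\alpha[y]\}$. On $P_{\pm1}$ the subspace topology coincides with that of $X$ (the canonical map $X\hookrightarrow F(X)$ is a topological embedding), so those points are covered. On $P_{\pm2}$, and at the length-$2$ points of $P_0$, the restriction of $i_2$ to $i_2^{-1}(F_2(X)\setminus F_1(X))$ is a homeomorphism onto $F_2(X)\setminus F_1(X)$; thus a neighbourhood base at such a point is the image under $i_2$ of a product neighbourhood base in $X\times X$, and again we obtain a $\mathfrak G$-base.

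There remains the point $e$, which is the heart of the matter. Here I would use the classical description of the free group topology at $e$: it is generated by the Graev extensions $\hat\rho$ of the continuous pseudometrics $\rho$ on $X$, so that $\{g:\hat\rho(g,e)<\varepsilon\}$ (with $\rho$ a continuous pseudometric and $\varepsilon>0$) is a neighbourhood base at $e$ in $F(X)$. Using invariance of $\hat\rho$ one computes $\hat\rho(xy^{-1},e)=\hat\rho(x^{-1}y,e)=\rho(x,y)$, so intersecting with the clopen set $P_0$ yields the neighbourhood base $N(\rho,\varepsilon)=\{e\}\cup\{xy^{-1}:\rho(x,y)<\varepsilon\}\cup\{x^{-1}y:\rho(x,y)<\varepsilon\}$. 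Setting $U_{\rho,\varepsilon}=\{(x,y):\rho(x,y)<\varepsilon\}$ and, for an entourage $U$,
\[
W(U)=\{e\}\cup\{xy^{-1}:(x,y)\in U,\ x\ne y\}\cup\{x^{-1}y:(x,y)\in U,\ x\ne y\},
\]
this reads $N(\rho,\varepsilon)=W(U_{\rho,\varepsilon})$. Since the universal uniformity is exactly the uniformity generated by all continuous pseudometrics, the $U_{\rho,\varepsilon}$ form a base of $\mathscr U$, and therefore $\{W(U):U\in\mathscr U\}$ is a neighbourhood base at $e$ in $F_2(X)$. Feeding in the universally uniform $\mathfrak G$-base, the family $\{W(U_\alpha):\alpha\in\N^{\N}\}$ is monotone, because $W$ is monotone in its argument and $U_\beta\subseteq U_\alpha$ whenever $\alpha\le\beta$; and it is a neighbourhood base at $e$, since every $W(U_{\rho,\varepsilon})$ contains some $W(U_\alpha)$ (the $U_\alpha$ being cofinal in $\mathscr U$), while each $W(U_\alpha)$ in turn contains some $W(U_{\rho,\varepsilon})$ and is hence a neighbourhood. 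This is the desired $\mathfrak G$-base at $e$.

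The main obstacle is exactly the identification in the last paragraph, namely that the trace on $F_2(X)$ of the neighbourhood filter of $e$ is controlled by the universal uniformity through the sets $W(U)$. This is the step where the hypothesis \emph{universally} uniform (rather than uniform for some compatible uniformity) is indispensable, because the free group topology is the finest one inducing the topology of $X$ and thus forces the fine uniformity; the delicate points are verifying that the Graev extensions restrict on $P_0$ to precisely the sets $W(U_{\rho,\varepsilon})$ and that continuous pseudometrics generate $\mathscr U$. Everything outside this identification is bookkeeping built on the clopen decomposition and the embedding and product facts above.
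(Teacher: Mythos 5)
Your reduction to the single point $e$ is exactly the paper's: you split off $X\cup X^{-1}$ as a clopen piece carrying the subspace topology of $X$, handle $F_2(X)\setminus F_1(X)$ through the homeomorphism $i_2\uhr_{i_2^{-1}(F_2(X)\setminus F_1(X))}$ together with Proposition~\ref{p0}, and are left with $e$. Your candidate base at $e$, the sets $W(U_\alpha)=\{x^{\varepsilon}y^{-\varepsilon}:(x,y)\in U_\alpha,\ \varepsilon=\pm1\}$, is also the paper's family $W_\alpha$. The divergence, and the problem, is in how you justify that these sets are cofinal in the neighbourhood filter of $e$ traced on $F_2(X)$.

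You derive this from the assertion that the topology of $F(X)$ at $e$ is generated by the Graev extensions $\hat\rho$ of the continuous pseudometrics on $X$. That is the classical description of the free \emph{Abelian} group $A(X)$ (Tkachenko, Pestov, Uspenski\v{\i}); for the non-Abelian group $F(X)$ it is false in general. Each $\hat\rho$ is an \emph{invariant} continuous pseudometric, so if the balls $\{g:\hat\rho(g,e)<\varepsilon\}$ formed a base at $e$ then $F(X)$ would be a balanced (SIN) group, which fails for most non-discrete $X$ (e.g.\ non-pseudocompact ones); the topology of $F(X)$ is strictly finer than the one generated by the Graev extensions. This is not a removable formality: the analogous ``entourage'' description already breaks down at the level of $F_4$, which is precisely why $F_4(C\bigoplus D)$ (with $D$ uncountable discrete) fails to be $csf$-countable and hence has no $\mathfrak G$-base, even though $F_2$ of the same space does. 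What rescues the theorem is a result specific to words of length at most $2$: Yamada's theorem (cited in the paper as \cite{Y1998}) that the sets $W(U)$, for $U$ ranging over the universal uniformity, do form a base of the trace on $F_2(X)$ of the neighbourhood filter of $e$. Your argument correctly shows that each $W(U_\alpha)$ \emph{is} a neighbourhood of $e$ in $F_2(X)$ (via $\hat\rho(xy^{-1},e)=\rho(x,y)$ and continuity of $\hat\rho$), but the converse inclusion --- every neighbourhood of $e$ in $F_2(X)$ contains some $W(U_\alpha)$ --- needs Yamada's theorem or an independent proof; as written, the step you yourself flag as ``the heart of the matter'' rests on a false premise.
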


\begin{proof}
Since $X$ has an universally uniform $\mathfrak{G}$-base, it is easy to see $X$ has a local $\mathfrak{G}$-base at each point. By Proposition~\ref{p0}, we see that $(X\bigoplus X^{-1}\bigoplus\{e\})^{2}$ has a local $\mathfrak{G}$-base at each point. Then $F_{2}(X)$ has a local $\mathfrak{G}$-base at each point $x\in X\cup X^{-1}$ since $X\cup X^{-1}$ is open and closed in $F_{2}(X)$. It is well known that $F_{2}(X)\setminus F_{1}(X)$ is homeomorphic to a subspace of $(X\bigoplus X^{-1}\bigoplus\{e\})^{2}$, and then $F_{2}(X)$ has a local $\mathfrak{G}$-base at each point $x\in F_{2}(X)\setminus F_{1}(X)$ since $F_{2}(X)\setminus F_{1}(X)$ is open in $F_{2}(X)$. It suffices to show that $F_{2}(X)$ has a $\mathfrak{G}$-base at $e$.

Suppose that $\mathscr{U}$ is the universally uniformity on $X$. Then one can take a basis $\mathscr{B}=\{U_{\alpha}: \alpha\in\mathbb{N}^{\mathbb{N}}\}$ for $\mathscr{U}$ such that for any $\alpha$ and $\beta$ in $\mathbb{N}^{\mathbb{N}}$ with $\alpha\leq\beta$, we have $U_{\beta}\subset U_{\alpha}$.

For each $\alpha\in\mathbb{N}^{\mathbb{N}}$, let $W_{\alpha}=\{x^{\varepsilon}y^{-\varepsilon}: (x, y)\in U_{\alpha}, \varepsilon=\pm 1\}.$ Then the family $\{W_{\alpha}: \alpha\in\mathbb{N}^{\mathbb{N}}\}$ is a base at $e$ in $F_{2}(X)$ by \cite{Y1998}. Obviously, $\{W_{\alpha}: \alpha\in\mathbb{N}^{\mathbb{N}}\}$ satisfies that for any $\alpha$ and $\beta$ in $\mathbb{N}^{\mathbb{N}}$ with $\alpha\leq\beta$, $W_{\beta}\subset W_{\alpha}$. Therefore, $\{W_{\alpha}: \alpha\in\mathbb{N}^{\mathbb{N}}\}$ is a local $\mathfrak{G}$-base at $e$.
\end{proof}

However, the following question is still unknown for us.

\begin{question}
Let $X$ be a space. If $F_{2}(X)$ has a $\mathfrak{G}$-base, does $F_{3}(X)$ have a $\mathfrak{G}$-base?
\end{question}

{\bf Acknowledgements}. The authors wish to thank professors Salvador Hern\'{a}ndez and Boaz Tsaban for telling us some information of the paper \cite{CVHT2014}. Moreover, the authors wish to thank professor Chuan Liu for reading parts of this paper and making comments. Finally, we hope to thank professor Shou Lin for finding a gap in our proof of Theorem 3.18 in the previous version and giving some key for us to supplement the proof.

\end{document}